\newtheorem{definition}{Definition}[subsection]
\newtheorem{note}{Note}[section]
\newtheorem{proposition}{Proposition}[section]
\newtheorem{theorem}{Theorem}[section]
\newtheorem{lemma}{Lemma}[section]
\newtheorem{example}{Example}[section]
\title{Non-affine fractal hypersurfaces: construction and dimensions}
\author[A. Hossain]{ A. Hossain\textsuperscript{1}}
\address{$^1$Department of Mathematics, Presidency University, 86/1, College Street, Kolkata, 700 073, West Bengal, India.}
\curraddr{}
\email{hossain4791@gmail.com}
\author[J. Buescu ]{ J. Buescu\textsuperscript{2}}
\address{$^2$Faculdade de Ciências da Universidade de Lisboa and CMAFcIO-Centro de Matemática, Aplicações Fundamentais e Investigação Operacional, Campo Grande, 16, Lisboa 1749-016, Portugal.}
\curraddr{}
\email{jsbuescu@fc.ul.pt}
\date{November, 2024}
\begin{document}
	\maketitle
	\begin{abstract}
		This article presents the construction of a non-affine hypersurface on an $n$-simplex in $\mathbb{R}^n$. Additionally, fractal dimension of the graph of a non-affine multivariate real-valued fractal function is estimated under certain conditions. Furthermore, the upper bound of the Hausdorff dimension of the invariant probability measure supported on the graph of such fractal function is estimated. 
			\end{abstract}
	\textbf{Keywords:} Iterated function system, Fractal hypersurface, Regular $n$-simplex, Non-affine fractal hypersurface, Fractal dimension, Invariant probability measure.\\
	\textbf{Mathematics Subject Classification:} Primary; 28A80, Secondary; 41A30, 37A50 
	\section{Introduction}
	Fractal geometry is a field of study that primarily focuses on understanding and describing the complex patterns and structures found in natural phenomena and objects, such as clouds, trees, mountains, coastlines, and even the cells of the human body \cite{Mandelbrot1982,Falconer2004,Coleman1992}. The world of fractals is home to many iconic and well-known examples, including the Koch curve, Cantor set, Cantor dust, Sierpiński gasket, and many more. These fractals have become synonymous with the field of fractal geometry and continue to inspire research and fascination. One key tool in fractal geometry is the theory of iterated function systems (IFSs), which offers powerful methods for generating and modeling fractals, allowing researchers to simulate and analyze these intricate patterns \cite{Barnsley2014,Hutchinson1981,Serpa2017}. The attractors generated by IFSs are typically fractal sets, characterized by their unique and intricate geometric structures \cite{Barnsley2014,Falconer2004,Buescu2012,Akhtar2022,DAniello2016}. Building on this concept, Barnsley \cite{Barnsley1986} introduced in 1986 the idea of fractal interpolation functions (FIFs), which are specifically generated by IFSs. This innovation enabled the creation of functions that can accurately model and reproduce the complex patterns found in fractal geometry \cite{Dalla2002,Massopust1990,chand2015approximation,Chand2015,hossain2023fractal}. Those functions are used to compress images by exploiting the self-similarity properties of fractals \cite{Fisher1994}, utilized in computer graphics to generate and simulate natural landscapes, such as mountains, rivers, and clouds, which have inherent fractal characteristics \cite{Barnsley2014,Barnsley1993,Solomyak2024}. Navascu{\'e}s \cite{Navascues2005} introduced the concept of non-affine fractal functions, expanding the field of fractal geometry and opening up new avenues for research and applications. Non-affine fractal functions do not exhibit affine self-similarity, meaning their scaling properties are not uniform in all directions. These functions have been used to model and analyze complex phenomena in various fields, used to approximate trigonometric polynomials, to analyze and process signals with complex, non-stationary behavior, to generate realistic models of natural objects and environments \cite{Peters1994}, or to model the growth and branching patterns of biological systems, such as blood vessels and trees. Many authors generalized these concepts by defining various fractal interpolation surfaces (FISs) on different types of regions, e.g. FISs on rectangular grids \cite{Dalla2002}, FIS on a triangular region without edge condition \cite{Massopust1990}, recurrent FISs on rectangular grids \cite{Liang2021}, non-affine FIS on a rectangle \cite{Navascues2020}, etc.
	In \cite{massopust2024fractal}, Massopust pioneered the development of multivariate, real-valued affine fractal functions defined on a regular $n$-simplex in $\mathbb{R}^n$, constructing an affine fractal basis for these functions which enables the representation of complex fractal structures. The graphical representation of these fractal functions is termed as affine fractal hypersurfaces.\\ 
	Motivated by these results, in this article we introduce a new class of multivariate, real-valued non-affine fractal functions defined on an $n$-simplex in $\mathbb{R}^n$, and we term the graph of such a function a \textbf{non-affine fractal hypersurface}. In addition, we have included a graphical representation of this type of function, providing concrete examples that facilitate a deeper understanding of their characteristics and functionality (see Figures~\ref{fig3} and \ref{fig4}).\\
	  Fractal dimensions are mathematical concepts used to describe the complexity and scaling properties of fractals. Many authors have studied the fractal dimension of graphs of different fractal functions in the literature \cite{nussbaum2012positive,Peters1994,sahu2020box,Akhtar2016,Hossain2023a,Jiang2023}.
	   In \cite{Buescu2019}, Buescu et al. explored systems of non-affine iterative functional equations, deriving bounds for the Hausdorff dimension of the solution's graph. Additionally, they elegantly connected these findings to related concepts in the literature, including Girgensohn functions, fractal interpolation functions, and Weierstrass functions, revealing a rich web of relationships between these mathematical objects. Verma at al. \cite{Verma2023} constructed more general non-affine FIFs on the Sierpiński gasket by taking variable scaling factors. Liang at al. \cite{Liang2021} also provided bounds for box dimensions of the graph of recurrent FISs for equally-spaced data sets. In this article in a more general setting we estimate the bound of fractal dimension of a non-affine fractal hypersurface on a regular $n$-simplex. We also study the Hausdorff dimension of the invariant probability measure supported on the graph.
	 
 \section{Preliminaries}
 For clarity and ease of understanding, this section will cover essential definitions and notations. Additional information may be found in references \cite{Barnsley2014,massopust2024fractal,Hutchinson1981}.
\subsection{Iterated function system}
Let $(X,\lVert\cdot\rVert_{X})$ be a Banach space and $d$ be the metric induced by this norm. Consider
    $$\mathscr{H}(X)=\left\{ K\subset X:~K\neq \emptyset~\mbox{and}~K~\mbox{is compact}\right\}$$
    endowed with the Hausdorff metric $H_d$, defined by $$H_d(A, B)=\max \left\{ d(A,B),~d(B,A)\right \}$$ for all $A, B \in \mathscr{H}(X),$ where $d(A,B)=\sup_{x\in A} \inf_{y\in B}d(x, y)$. The space $\left(\mathscr{H}(X),~H_d\right)$ is complete if $(X,d)$ is complete \cite{Barnsley2014}. Let $W_n:X\rightarrow X$, for $n=1,2,\ldots,N$, be continuous functions; then $\mathscr{W}=\left\{\big(X;W_n\big):~n=1,2,\ldots,N\right\}$ is called an IFS \cite{Barnsley2014,Falconer2004}. If, for each $n=1,2,\ldots,N$, the $W_n$ are contractive maps, that is, if there exist $s_n\in[0,1($ such that
    \begin{equation*}
        d\left(W_n(x),W_n(y)\right)\leq s_n d(x,y)
    \end{equation*}
for all $x,y\in X$, then the corresponding IFS $\mathscr{W}=\left\{\big(X;W_n\big):~n=1,2,\ldots,N\right\}$ is known as a hyperbolic IFS. In these conditions, the set-valued  Hutchinson operator  $W : \mathscr{H}(X) \rightarrow \mathscr{H}(X)$, given by $W(B)=\bigcup_{n=1}^N W_n(B)~ \mbox{for all}~ B \in \mathscr{H}(X)$, is also a contraction map with contractivity factor $s=\max\{s_n:~n=1,2,\ldots,N\}$. 
Define $W^0(B) = B$ and let $W^k(B)$ denote the $k$-fold composition of $W$ applied to $B$.
\begin{definition}(See \cite{barnsley2013developments})
\label{dfnbasin}
	A compact subset $F$ of $(X,d)$ is called an \textbf{attractor} of an IFS $\mathscr{W}=\left\{\big(X;W_n\big):~n=1,2,\ldots,N \right\}$ if
	\begin{enumerate}
		\item $W(F)=F$ and
		\item \label{itemboa} there exists an  open subset $U$ of $X$ such that $F\subset U$ and
		$$\lim\limits_{k\rightarrow \infty}W^k(B)=F \quad \mbox{for all}~ B\in \mathscr{H}(U),$$
		where convergence is with respect to the Hausdorff metric $H_d$ on $\mathscr{H}(X).$	
	\end{enumerate}
\end{definition}
\begin{note}
	The largest open set $U$ in Definition~\ref{dfnbasin} is known as the \textbf{basin} of attraction for the attractor $F$ of the IFS $\mathscr{W}$ and is denoted by $B(F)$. 
\end{note}
\begin{definition}[Hausdorff dimension]
	Let $\big(X,~d\big)$ be metric space. Then the Hausdorff dimension of a set $F\subset X$ is given by 
	\begin{equation*}
		\dim_HF=\inf\left\{s>0:~\forall\delta>0,~\mbox{there is countable cover}~\left\{U_i\right\}_{i\in\mathbb{N}}~\mbox{of}~F~\mbox{such that}~\sum_{i\in\mathbb{N}}\lvert U_i\rvert^s<\delta\right\},
	\end{equation*}
	where $\lvert U_i\rvert$ denotes the diameter of $U_i$.
\end{definition}
\begin{definition}[Box-counting dimension]
	Let $(X,d)$ be a metric space and $F$ be a compact subset of $X$. Let $N_r(F,d)$ be the minimum number of balls of radius $r$ that cover $F$. Then the upper and lower box dimensions of $F$ are defined by
	\begin{align*}
		\overline{\dim}_B(F,d)=&\limsup\limits_{r\rightarrow 0}\frac{\log N_r(F,d)}{-\log r}\\ \underline{\dim}_B(F,d)=&\liminf\limits_{r\rightarrow 0}\frac{\log N_r(F,d)}{-\log r}.
	\end{align*}
	If both exist and are equal, the common value is called the box dimension of $F$ and is denoted by $\dim_B(F,d)$ \cite{Falconer2004}.
\end{definition}

\begin{definition}
	If $\mu$ is a Borel probability measure on $X$, then the Hausdorff dimension of $\mu$ is given by 
	\begin{equation}\label{mmdfg}
		\dim_H(\mu)=\inf\big\{\dim_HF:~F~\mbox{is a Borel subset such that}~\mu(F)>0\big\}.
	\end{equation}
\end{definition}
\subsection{Fractal Surfaces}
Let $(X,\lVert\cdot\rVert_{X})$ and $(Y,\lVert\cdot\rVert_{Y})$ be two Banach spaces. In this section, a class of special attractors of IFSs, namely attractors that are the graphs of bounded functions $f:\Delta\subset X\to Y$, where $\Delta\in\mathscr{H}(X)$, is provided (see \cite{massopust2024fractal}). Suppose there exists a collection of injective maps $\{L_i:\Delta\to \Delta,~i=1,2,\ldots,N\}$ such that 
\begin{align*}
   &\{L_i(\Delta):~i=1,2,\ldots,N\}~\mbox{ is a set-theoretic partition of $\Delta$, that is}\\ 
   & \Delta=\bigcup_i^NL_i(\Delta)~ \mbox{and}~\big( L_i(\Delta)\big)^\circ\cap \big(L_j(\Delta)\big)^\circ=\emptyset,~\mbox{for all} ~i\neq j,
\end{align*}
where $(A)^\circ$ denotes the interior of the set $A$. 
Let $\mathcal{B}(\Delta)=\left\{f:\Delta\to Y:~f~\mbox{is bounded}\right\}$ and for all $f\in\mathcal{B}(\Delta)$, define a norm $\lVert f\rVert_{\infty,\Delta}:=\sup_{x\in\Delta}\lVert f(x)\lVert_Y$. 
It is straightforward to show that $\big(\mathcal{B}(\Delta),\lVert f\rVert_{\infty,\Delta}\big)$ is a Banach space.
For $i=1,2,\ldots,N$, let $F_i:\Delta\times Y\to Y$ be a mapping which is contractive with respect to the second variable, i.e., there exists $0\leq c<1$ such that 
\begin{equation*}
    \lVert F_i(x,y_1)-F_i(x,y_2)\rVert_Y\leq c\lVert y_1-y_2\rVert_Y, \quad \forall x\in \Delta~\mbox{and}~\forall y_1,y_2\in Y.
\end{equation*}
Define a Read-Bajactarevi\'{c} (RB)-operator $\mathcal{T}:\mathcal{B}(\Delta)\to \mathcal{B}(\Delta)$ by
\begin{equation}\label{1rbopt}
   \mathcal{T}(f)=\sum_{i=1}^N F_i\big(L_i^{-1}(x),f\circ L_i^{-1}(x)\big)\cdot\chi_{\Delta_i}, 
\end{equation}
where $\chi_A$ denotes the characteristic function of $A$, which takes the value one on $A$ and zero outside  $A$. Then $\mathcal{T}$ is well defined and is also a contraction map on the Banach space $\mathcal{B}(\Delta)$, thus having a unique fixed point $f$ in  $\mathcal{B}(\Delta)$. This unique fixed point is called the multivariate fractal function and its graph is a fractal surface on $X\times Y$; sometimes it is called \textbf{fractal hypersurface} on $X\times Y$. The graph of this function $f$ is an attractor of the IFS $\left\{\big(\Delta\times Y;W_i\big):~i=1,2,\ldots,N\right\}$, where the $W_i$'s are given by 
\begin{equation*}
    W_i(x,y)=\big(L_i(x),F_i(x,y)\big).
\end{equation*}
For more details see Massopust \cite{massopust2024fractal}. 

\subsection{Affinely generated fractal surfaces in $\mathbb{R}^{n+1}$}
	In this section, we deal mainly with the connections between IFS and multivariate real-valued affine FIF. 
 \begin{definition}
    Let $\{e_0,e_1,\ldots,e_n\}$ be a set of affinely independent points in $\mathbb{R}^n$. A regular n-simplex on $\mathbb{R}^n$ is defined as the point set
	\begin{equation*}
		\Delta:=\left\{x\in \mathbb{R}^n:x=\sum_{k=0}^nt_ke_k;0\leq t_k\leq 1;\sum_{k=0}^nt_k=1\right\}.
	\end{equation*}.
 \end{definition}
 Over the $n$-simplex $\Delta$, consider 
	\begin{align*}
		\mathcal{C}(\Delta)=\left\{f:\Delta\to\mathbb{R}~\mbox{such that $f$ is continuous on $\Delta$}\right\}.
	\end{align*}
Then the space $\left(\mathcal{C}(\Delta),d_{\infty,\Delta}\right)$ forms a complete metric space, where the metric $d_{\infty,\Delta}$ is induced by the sup norm, defined as $\lVert f\rVert_{\infty,\Delta}:=\sup_{x\in\Delta}\lvert f(x)\lvert$ for $f\in\mathcal{C}(\Delta)$.\\

Now, let $\{\Delta_i:i=1,2,\ldots,N\}$ be a collection of non-empty compact subsets of $\Delta$ with the properties:
\begin{align*}
    &(A1)\quad \Delta=\bigcup_{i}^N\Delta_i;\\
    &(A2)\quad \Delta_i ~\mbox{is similar to}~ \Delta,~i=1,2,\ldots,N;\\
    &(A3)\quad \Delta_i~\mbox{is congruent to}~\Delta_j~\mbox{with}~(\Delta_i)^\circ\cap (\Delta_j)^\circ=\emptyset ~\mbox{for all}~i,j\in\{1,2,\ldots,N\}.
\end{align*}
Then there exist $N$ contractive similarity maps $L_i:\Delta\to \Delta_i$ given by 
	\begin{equation}\label{limaps}
		L_i=c_iO_i+t_i,
	\end{equation}
	where $c_i<1$ is the similarity constant or the similarity ratio for $\Delta_i$ with respect to
	$\Delta$, $O_i$ is an orthogonal transformation on $\mathbb{R}^n$, and $t_i$ is  a translation in $\mathbb{R}^n$.\\

 Let $V$ be the set of vertices of $\Delta$ and $V_i$ be the set of vertices of $\Delta_i$. Let $l:\bigcup V_i\to V$ be a {\bf labeling map}, defined in such a way that the condition
	\begin{equation*}\label{labving}
		L_i(l(v))=v
	\end{equation*}
 is satisfied for all $i=1,2,\ldots,N$ and for all $v\in\bigcup V_i$. Consider the interpolation set 
	$$\{(v,z_v)\in\Delta\times \mathbb{R}:v \in \bigcup V_i\}.$$
	For $i=1,2,\ldots,N$, define the continuous maps $F_i:\Delta\times\mathbb{R}\to\mathbb{R}$
	\begin{equation}\label{fnvg}
		F_i(x,y)=\lambda_i(x)+\alpha_iy,
	\end{equation}
	 where $\lambda_i:\Delta\to \mathbb{R}$ are affine maps and $\alpha_i\in(-1,1)$. The affine map $\lambda_i$ is uniquely determined by the interpolation conditions
	 \begin{equation*}
	 	\lambda_i(l(v))+\alpha_i z_{l(v)}=z_v.
	 \end{equation*}
	 Impose the following join-up conditions:
	 \begin{align*}
	 	\lambda_i\circ L_i^{-1}(s,t)+\alpha_i f\circ L_i^{-1}(s,t)=\lambda_j\circ L_j^{-1}(s,t)+\alpha_j f\circ L_j^{-1}(s,t)
	 \end{align*}
 for all $(s,t)\in E_{ij}:=\Delta_i\cap\Delta_j$, $i\neq j$ and for all continuous functions $f:\Delta\to \mathbb{R}$ (see \cite{massopust2024fractal,Serpa2015,Buescu2021,Bedford2014}). The set $E_{ij}$ is called a common edge of $\Delta_i$ and $\Delta_j$.\\
 
	  Finally, the IFS is generated by the mappings $L_i$ and $F_i$, having an attractor which is a graph of the continuous map $f:\Delta\to \mathbb{R}$. The map $f$ is called a {\bf multivariate
		real-valued affine fractal function} and its graph is known as {\bf affinely generated fractal
		hypersurface} or an {\bf affine fractal hypersurface} \cite{massopust2024fractal}.
 
	\section{ Construction of non-affine fractal hypersurface.}\label{conafhsfr}
 In this section, we present the construction of a non-affine multivariate fractal function on an $n$-simplex. \\
 
 Let $\Delta$ be an $n$-simplex in $\mathbb{R}^n$ and $\left\{\Delta_i:~i=1,2,\ldots,N\right\}$ be the set-theoretic partition of $\Delta$ satisfying the conditions $(A1)$, $(A2)$ and $(A3)$, and $V_0$ be the vertex set of $\Delta$. Let $\alpha=\left(\alpha_1,\alpha_2,\ldots,\alpha_N\right)\in\mathbb{R}^N$ be a vector such that $\lvert\alpha_i\rvert <1$, which will act as a scaling vector. Let   $L_i:\Delta\to \Delta_i$ be the contractive similarity maps given in (\ref{limaps}). For $k\in\mathbb{N}$ and ${\bf i}=(i_1,i_2,\ldots,i_k)\in\{1,2,\ldots,N\}^k$, let $\Delta_{{\bf i}}:=L_{{\bf i}}(\Delta)=L_{i_1}\circ L_{i_2}\circ\cdots\circ L_{i_k}(\Delta)$ and $V_{\bf i}$ be the corresponding vertex set of $\Delta_{{\bf i}}$. Let us define $ Z_k:=\bigcup_{{\bf i}\in\{1,2,\ldots,N\}^k}V_{\bf i}$ and $Z_0=V_0$.\\

	 For a fixed $g\in \mathcal{C}(\Delta)$, let $g(v_k)=z_{v_k}$ for all $v_k\in Z_k$. Then the space
	\begin{align*}
	\mathcal{C}_{0}(\Delta)=\left\{f\in\mathcal{C}(\Delta): f(v_k)=g(v_k)=z_{v_k},~v_k\in Z_k\right\}
		\end{align*}
	is a closed subset of $\left(\mathcal{C}(\Delta),\lVert\cdot\rVert_{\infty,\Delta}\right)$, being therefore complete. Consider the interpolation set
	\begin{equation}
	\mathscr{Z}:=\big\{(v_k,z_{v_k})\in\Delta\times \mathbb{R}:v_k \in Z_k\big\}.
	\end{equation}
	Let us define a labeling map $l_k:Z_k \to Z_{k-1}$ for $k\in\mathbb{N}$ that satisfies the condition
 \begin{equation}\label{vstcnt}
		L_{\bf i}(l_k(v_k))=v_{k}
	\end{equation}
 for all ${\bf i}\in\{1,2,\ldots,N\}^k$, and for all $v_k\in Z_k$. For ${\bf i}\in\{1,2,\ldots,N\}^k$, consider the contraction homeomorphism $L_{\bf i}:\Delta\to\Delta_{{\bf i}}$ given by
 \begin{equation}\label{baseifs}
 L_{{\bf i}}(x)=L_{i_1}\circ L_{i_2}\circ\cdots\circ L_{i_k}(x)
 \end{equation}
 and the continuous maps $F_{\bf i}:\Delta\times\mathbb{R}\to\mathbb{R}$,
	\begin{equation}\label{frabu}
			F_{\bf i}(x,y)=\lambda_{\bf i}(x)+\alpha_{\bf i}y,
	\end{equation}
	where $\alpha_{\bf i}=\alpha_{i_1}\cdot\alpha_{i_2}\cdots\alpha_{i_k}$, $i_j\in\{1,\ldots,N\}$ for $j=1,\ldots,k$, and $\lambda_{\bf i}:\Delta\to \mathbb{R}$ are defined by 
	\begin{equation}\label{lasmahsy}
		\lambda_{\bf i}(x)=g\circ L_{\bf i}(x)-\alpha_{\bf i} b(x),
	\end{equation}
where the function $b:\Delta\to\mathbb{R}$ satisfies the condition $b(v_k)=g(v_k)$ for all $v_k \in Z_k$ and $b\neq g$. For all ${\bf i},{\bf j}\in\{1,2,\ldots,N\}^k$ and for all $x\in E_{\bf ij}=\Delta_{\bf i}\cap\Delta_{\bf j}$, we impose the following join-up condition
	\begin{equation}\label{edgrtg}
		\alpha_{\bf i}(f-b)\circ L_{\bf i}^{-1}(x)=\alpha_{\bf j}(f-b)\circ L_{\bf j}^{-1}(x)
	\end{equation} 
for all $f\in \mathcal{C}_{0}(\Delta)$. Since $\lvert\alpha_i\rvert<1$, for all $i\in\{1,2,\ldots,N\}$, therefore, $\lvert\alpha_{\bf i}\rvert<1$ for all ${\bf i}\in\{1,2,\ldots,N\}^k$, hence it follows that the map $F_{\bf i}$ is contractive with respect to the second variable and for all $v_k\in Z_k$,
\begin{equation}\label{coniuyt}
	F_{\bf i}\big(l_k(v_k),z_{l_k(v_k)}\big)=\lambda_{\bf i}(l_k(v_k))+\alpha_{\bf i} z_{l_k(v_k)}.
\end{equation}
Now,
\begin{align}\label{lmahsk}
	\lambda_{\bf i}(l_k(v_k))&=g\big(L_{\bf i}(l_k(v_k))\big)-\alpha_{\bf i} b(l_k(v_k))\\
\nonumber	&=g(v_k)-\alpha_{\bf i} g(l_k(v_k))\\
\nonumber	&=z_{v_k}-\alpha_{\bf i} z_{l_k(v_k)}.
\end{align}
From (\ref{coniuyt}) and (\ref{lmahsk}), we get
\begin{equation}
	F_{\bf i}\big(l_k(v_k),z_{l_k(v_k)}\big)=z_{v_k}.
\end{equation}
	Recalling the RB-operator $\mathcal{T}:\mathcal{C}_{0}(\Delta)\to \mathcal{C}_{0}(\Delta)$ defined in equation (\ref{1rbopt}) such that
	\begin{equation}\label{rboptr}
		\mathcal{T}(f)=\sum_{{\bf i}\in\{1,2,\ldots,N\}^k} g\cdot\chi_{\Delta_{\bf i}}+\sum_{{\bf i}\in\{1,2,\ldots,N\}^k}\alpha_{\bf i}\big(f-b\big)\circ L_{\bf i}^{-1}\cdot\chi_{\Delta_{\bf i}},
	\end{equation}
we then obtain the following result.
\begin{theorem}
	The operator $\mathcal{T}$ is well defined and contractive on $ \mathcal{C}_{0}(\Delta)$.
\end{theorem}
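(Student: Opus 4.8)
The plan is to verify the two assertions separately, handling well-definedness (single-valuedness, continuity and membership in $\mathcal{C}_{0}(\Delta)$) first and contractivity second. The starting point is a pointwise description of the operator: unwinding (\ref{rboptr}) through (\ref{frabu}) and (\ref{lasmahsy}), for $x\in\Delta_{\bf i}$ one has $\lambda_{\bf i}\circ L_{\bf i}^{-1}(x)=g(x)-\alpha_{\bf i}\,b\circ L_{\bf i}^{-1}(x)$, so that
$$\mathcal{T}(f)(x)=g(x)+\alpha_{\bf i}\,(f-b)\circ L_{\bf i}^{-1}(x).$$
I would record this identity at the outset, since every subsequent estimate reduces to it.

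First I would establish that $\mathcal{T}(f)$ is single-valued. The sets $\{\Delta_{\bf i}\}$ cover $\Delta$ but overlap on the common edges $E_{\bf ij}=\Delta_{\bf i}\cap\Delta_{\bf j}$, so a priori the two branches of the piecewise formula compete there. For $x\in E_{\bf ij}$ the discrepancy between the ${\bf i}$-branch and the ${\bf j}$-branch equals $\alpha_{\bf i}(f-b)\circ L_{\bf i}^{-1}(x)-\alpha_{\bf j}(f-b)\circ L_{\bf j}^{-1}(x)$, which vanishes precisely by the join-up condition (\ref{edgrtg}). Hence the two formulas agree on every overlap and $\mathcal{T}(f)$ is an honest function on $\Delta$. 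This is the step I expect to be the crux of the whole statement, as it is the sole place where the edge condition enters and it is what makes $\mathcal{T}(f)$ a genuine continuous function rather than a merely piecewise one.

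Continuity then follows by a gluing argument. On the interior of each $\Delta_{\bf i}$ the map $x\mapsto g(x)+\alpha_{\bf i}(f-b)\circ L_{\bf i}^{-1}(x)$ is continuous, being built from the continuous functions $g$, $b$, $f$ and the homeomorphism $L_{\bf i}^{-1}$. Since $\{\Delta_{\bf i}\}$ is a finite family of closed sets with union $\Delta$ whose branches coincide on the closed overlaps by the previous paragraph, the pasting lemma yields $\mathcal{T}(f)\in\mathcal{C}(\Delta)$. To confirm $\mathcal{T}(f)\in\mathcal{C}_{0}(\Delta)$ it remains to check the interpolation values: for $v_k\in Z_k$ pick ${\bf i}$ with $v_k\in V_{\bf i}$; then $L_{\bf i}^{-1}(v_k)=l_k(v_k)$ by (\ref{vstcnt}), and since $l_k(v_k)\in Z_{k-1}\subseteq Z_k$ (the vertices at level $k-1$ persist among those at level $k$) we have $f(l_k(v_k))=z_{l_k(v_k)}$, whence by the computation (\ref{lmahsk}) already carried out, $\mathcal{T}(f)(v_k)=F_{\bf i}\big(l_k(v_k),z_{l_k(v_k)}\big)=z_{v_k}$. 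Thus $\mathcal{T}$ maps $\mathcal{C}_{0}(\Delta)$ into itself.

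Finally, for contractivity I would use the piecewise identity directly. For $f,h\in\mathcal{C}_{0}(\Delta)$ and $x\in\Delta_{\bf i}$ the $g$- and $b$-terms cancel, leaving
$$\lvert\mathcal{T}(f)(x)-\mathcal{T}(h)(x)\rvert=\lvert\alpha_{\bf i}\rvert\,\big\lvert(f-h)\circ L_{\bf i}^{-1}(x)\big\rvert\le\lvert\alpha_{\bf i}\rvert\,\lVert f-h\rVert_{\infty,\Delta}.$$
Taking the supremum over $x\in\Delta$ and over the finitely many ${\bf i}\in\{1,2,\ldots,N\}^k$ gives $\lVert\mathcal{T}(f)-\mathcal{T}(h)\rVert_{\infty,\Delta}\le c\,\lVert f-h\rVert_{\infty,\Delta}$ with $c:=\max_{\bf i}\lvert\alpha_{\bf i}\rvert\le(\max_i\lvert\alpha_i\rvert)^k<1$, the strict inequality coming from $\lvert\alpha_i\rvert<1$. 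This is the routine part; the real content lies in the edge-consistency step that guarantees $\mathcal{T}(f)$ is a bona fide continuous member of $\mathcal{C}_{0}(\Delta)$.
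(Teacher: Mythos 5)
Your proposal is correct and follows essentially the same route as the paper's proof: establish the pointwise formula on each $\Delta_{\bf i}$, use the join-up condition (\ref{edgrtg}) to reconcile the branches on the overlaps, verify the interpolation values via (\ref{vstcnt}), and obtain contractivity with factor $\max_{\bf i}\lvert\alpha_{\bf i}\rvert\le\alpha_\infty^k<1$. If anything you are slightly more careful than the paper, e.g.\ in writing the contraction estimate with $(f-h)\circ L_{\bf i}^{-1}(x)$ rather than $(f_1-f_2)(x)$ and in noting explicitly that $l_k(v_k)\in Z_{k-1}\subseteq Z_k$ is what lets the interpolation condition of $\mathcal{C}_0(\Delta)$ be invoked.
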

\begin{proof} It is easy to see that for ${\bf i}\in\{1,2,\ldots,N\}^k$, on each partition $\Delta_{{\bf i}}$, $\mathcal{T}(f)$ is continuous for all $f\in\mathcal{C}_{0}(\Delta)$. Let $v_k\in Z_k$.  Then $v_k\in V_{{\bf i}}\subset \Delta_{{\bf i}}$ for some ${\bf i}\in\{1,2,\ldots,N\}^k$. Therefore, from (\ref{vstcnt}), we get
	\begin{align*}
	(\mathcal{T}f)(v_k)&=g(v_k)+\alpha_{\bf i}\big(f-b\big)(L_{\bf i}^{-1}(v_k))\\
	&=g(v_k)+	\alpha_{\bf i}\big(f(l_k(v_k))-b(l(v_k))\big)=z_{v_k}.
	\end{align*}
Also for  $x\in E_{\bf ij}=\Delta_{\bf i}\cap\Delta_{\bf j}$, from (\ref{edgrtg}), we obtain
\begin{align*}
		(\mathcal{T}f)(x)=g(x).
\end{align*}
This shows that $\mathcal{T}$ is well defined and $(\mathcal{T}f)(v_k)=z_{v_k}$ for all $v_k \in Z_k$. Also, from (\ref{rboptr}), for all $f_1,f_2\in \mathcal{C}_{0}(\Delta)$ and all $x\in \Delta_{{\bf i}}$ we get
\begin{align*}
	\lvert \mathcal{T}(f_1)(x)-\mathcal{T}(f_2)(x)\rvert&=\lvert\alpha_{\bf i}\rvert \lvert f_1(x)-f_2(x)\rvert\\
		&\leq\alpha^k_{\infty}\lVert f_1-f_2\rVert_{\infty,\Delta},
\end{align*}
where, $\alpha_{\infty}=\max_{{1\leq i\leq N}}\{\vert \alpha_{i}\vert\}<1$. The above inequality is true for all $x\in\Delta=\bigcup_{{\bf i}\in\{1,2,\ldots,N\}^k}\Delta_{{\bf i}}$, hence taking the supremum over all $x\in\Delta$, we get 
	\begin{align}\label{rbcntv}
	\lVert \mathcal{T}(f_1)-\mathcal{T}(f_2)\rVert_{\infty,\Delta}\leq\alpha^k_{\infty}\lVert f_1-f_2\rVert_{\infty,\Delta}.
\end{align}
Since $\alpha^k_{\infty} <1$, $\mathcal{T}$ is contractive on $\mathcal{C}_{0}(\Delta)$. 
\end{proof}
The Banach fixed point theorem thus ensures that $\mathcal{T}$ has a unique fixed point $f^{\alpha}$ in $\mathcal{C}_{0}(\Delta)$. The function $f^{\alpha}$ is said to be a  {\bf multivariate real-valued non-affine fractal function} and its graph a {\bf non-affine fractal hypersurface}. Now, from (\ref{rboptr}), the function $f^{\alpha}$ satisfies the functional equation
\begin{equation}\label{srebfh}
	f^{\alpha} =\mathcal{T}(f^{\alpha})=\sum_{{\bf i}\in\{1,2,\ldots,N\}^k} g\cdot\chi_{\Delta_{\bf i}}+\sum_{{\bf i}\in\{1,2,\ldots,N\}^k}\alpha_{\bf i}\big(f^{\alpha}-b\big)\circ L_{\bf i}^{-1}\cdot\chi_{\Delta_{\bf i}}.  
\end{equation}
Consider the IFS $\mathscr{W}=\left\{\big(\Delta\times \mathbb{R};~W_{\bf i}\big):~{\bf i}\in\{1,2,\ldots,N\}^k\right\}$, where the maps
  $W_{\bf i}:\Delta\times \mathbb{R}\to \Delta_{\bf i}\times \mathbb{R}$ are given by
	\begin{equation}\label{wisfr}
		W_{\bf i}(x,y)=(L_{\bf i}(x),F_{\bf i}(x,y)).
	\end{equation}
 It thus follows that this IFS is hyperbolic, hence it has a unique attractor $G$. 
 \begin{theorem}
The attractor of the above IFS  $\left\{\big(\Delta\times \mathbb{R}; W_{\bf i}\big):~{\bf i}\in\{1,2,\ldots,N\}^k\right\}$ is the graph of the fractal function $f^{\alpha}$.
 \end{theorem}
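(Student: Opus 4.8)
The plan is to use the defining property of a hyperbolic attractor. Since the IFS $\mathscr{W}$ is hyperbolic, its Hutchinson operator $W(B)=\bigcup_{\bf i}W_{\bf i}(B)$ has a \emph{unique} nonempty compact fixed point, which is precisely the attractor $G$. Hence it suffices to exhibit a single nonempty compact set $K$ with $W(K)=K$ and identify $K$; uniqueness then does the rest. I would take $K=\mathcal{G}:=\{(x,f^{\alpha}(x)):x\in\Delta\}$, the graph of $f^{\alpha}$. Compactness of $\mathcal{G}$ is immediate, because $f^{\alpha}\in\mathcal{C}_{0}(\Delta)\subset\mathcal{C}(\Delta)$ is continuous on the compact set $\Delta$, so its graph is a compact subset of $\Delta\times\mathbb{R}$.

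The core of the argument is to compute $W_{\bf i}(\mathcal{G})$ for a fixed ${\bf i}\in\{1,2,\ldots,N\}^k$ and show that it is exactly the portion of $\mathcal{G}$ lying over $\Delta_{\bf i}$. By (\ref{wisfr}) we have $W_{\bf i}\big(x,f^{\alpha}(x)\big)=\big(L_{\bf i}(x),F_{\bf i}(x,f^{\alpha}(x))\big)$; performing the substitution $u=L_{\bf i}(x)$, so that $x=L_{\bf i}^{-1}(u)$ and $u$ ranges over $\Delta_{\bf i}=L_{\bf i}(\Delta)$, gives $W_{\bf i}(\mathcal{G})=\big\{\big(u,F_{\bf i}(L_{\bf i}^{-1}(u),f^{\alpha}(L_{\bf i}^{-1}(u)))\big):u\in\Delta_{\bf i}\big\}$. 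Inserting the definitions (\ref{frabu}) and (\ref{lasmahsy}), and simplifying $\lambda_{\bf i}(L_{\bf i}^{-1}(u))=g(u)-\alpha_{\bf i}b(L_{\bf i}^{-1}(u))$, the second coordinate collapses to $g(u)+\alpha_{\bf i}(f^{\alpha}-b)(L_{\bf i}^{-1}(u))$. This is precisely the value prescribed for $f^{\alpha}(u)$ by the functional equation (\ref{srebfh}) on $\Delta_{\bf i}$, so that $W_{\bf i}(\mathcal{G})=\{(u,f^{\alpha}(u)):u\in\Delta_{\bf i}\}$.

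To conclude, I would take the union over all ${\bf i}\in\{1,2,\ldots,N\}^k$ and invoke the partition property: iterating $(A1)$ yields $\Delta=\bigcup_{\bf i}\Delta_{\bf i}$, whence $W(\mathcal{G})=\bigcup_{\bf i}W_{\bf i}(\mathcal{G})=\{(u,f^{\alpha}(u)):u\in\Delta\}=\mathcal{G}$. Since $\mathcal{G}$ is thus a nonempty compact set fixed by $W$, the uniqueness of the attractor of a hyperbolic IFS forces $G=\mathcal{G}$, which is the assertion.

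I do not anticipate a genuine obstacle, as the essential analytic work (well-definedness and contractivity of $\mathcal{T}$) is already done. The step demanding the most care is the change-of-variables bookkeeping in the middle paragraph, together with matching the resulting expression term by term against the RB functional equation (\ref{srebfh}). The only point that might superficially appear problematic—consistency of the pieces $W_{\bf i}(\mathcal{G})$ and $W_{\bf j}(\mathcal{G})$ on a common edge $E_{\bf ij}=\Delta_{\bf i}\cap\Delta_{\bf j}$—is already neutralized, since $f^{\alpha}$ is a genuine single-valued continuous function on $\Delta$, its compatibility across edges being exactly the content of the join-up condition (\ref{edgrtg}) used to establish well-definedness of $\mathcal{T}$; therefore the union automatically forms the graph of a function rather than a multivalued set.
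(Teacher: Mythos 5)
Your proposal is correct and follows essentially the same route as the paper: both show that the graph $G(f^{\alpha})$ is a nonempty compact fixed point of the Hutchinson operator by verifying $F_{\bf i}\big(x,f^{\alpha}(x)\big)=f^{\alpha}\big(L_{\bf i}(x)\big)$ via \eqref{frabu}, \eqref{lasmahsy} and the functional equation \eqref{srebfh}, and then invoke uniqueness of the attractor of the hyperbolic IFS. Your added remarks on compactness and edge consistency are correct but only make explicit what the paper leaves implicit.
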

\begin{proof}
Let	$G(f^{\alpha}):=\{\big(x,f^{\alpha}(x)\big):~x\in\Delta\}$. Then 
\begin{align*}
	\bigcup_{{\bf i}\in\{1,2,\ldots,N\}^k}W_{{\bf i}}\left(G(f^{\alpha})\right)&=\bigcup_{{\bf i}\in\{1,2,\ldots,N\}^k}\left\{W_{{\bf i}}\big(x,f^{\alpha}(x)\big):~ x\in\Delta\right\}\\
	&=\bigcup_{{\bf i}\in\{1,2,\ldots,N\}^k}\left\{\left(L_{\bf i}(x),F_{\bf i}\big(x,f^{\alpha}(x)\big)\right):~ x\in\Delta \right\}.
\end{align*}
Now, from (\ref{frabu}), (\ref{lasmahsy}) and (\ref{srebfh}), we get
\begin{align*}
	F_{\bf i}\big(x,f^{\alpha}(x)\big)&=g\big(L_{\bf i}(x)\big)-\alpha_{\bf i}b(x)+\alpha_{\bf i}f^{\alpha}(x)\\
	&=g\big(L_{\bf i}(x)\big)+\alpha_{\bf i}\big(f^{\alpha}-b\big)(x)=f^{\alpha}\big(L_{\bf i}(x)\big). 
\end{align*}
Therefore,
\begin{align*}
	\bigcup_{{\bf i}\in\{1,2,\ldots,N\}^k}W_{{\bf i}}\left(G(f^{\alpha})\right)&=\bigcup_{{\bf i}\in\{1,2,\ldots,N\}^k}\left\{\left(L_{\bf i}(x),f^{\alpha}\big(L_{\bf i}(x)\big)\right):~ x\in\Delta \right\}\\
	&=\bigcup_{{\bf i}\in\{1,2,\ldots,N\}^k}\left\{\big(x,f^{\alpha}(x)\big):~ x\in\Delta_{\bf i} \right\}\\
	&=\{\big(x,f^{\alpha}(x)\big):~x\in\Delta\}=G(f^{\alpha}).
\end{align*}
This shows that $G(f^{\alpha})$ is the attractor of the IFS, and hence by uniqueness $G=G(f^{\alpha})$. This completes the proof.
\end{proof}

The following examples illustrate the construction of non-affine fractal hypersurfaces for values of $k=1$ and $2$ respectively.

\begin{example}\label{1stillus}
	 For simplicity, we consider a $2$-simplex $\Delta$ in $\mathbb{R}^2$ and the classical function $g:\Delta\to\mathbb{R}$ given by (	see Figure \ref{fig1})
 	\begin{equation*}
 		g(x,y)=5+x^3 + y^2+\sin{2\pi x}\sin{2\pi y}.
 	\end{equation*}
 
 	\begin{figure}[ht!]
 		\centering
 		\includegraphics[width=10cm, height=7cm]{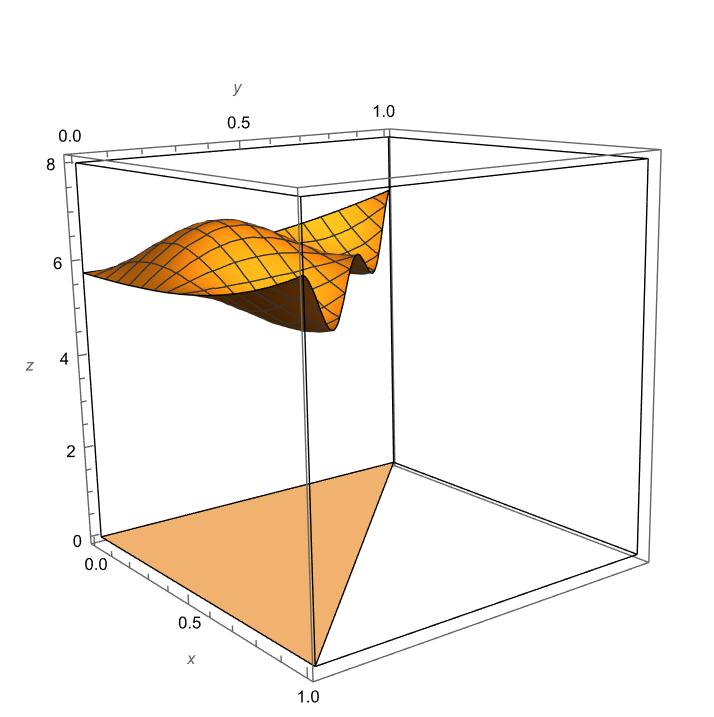}
 		\caption{Graphical representation of the classical function $g$.} \label{fig1}
 	\end{figure}
 	 For $k=1$, the vertex set is $\big\{(0,0),(\frac{1}{2},0),(1,0),(\frac{1}{2},\frac{1}{2}),(0,1),(0,\frac{1}{2})\big\}$. We consider the scale factors $\alpha_1=\alpha_2=\frac{4}{5}$ and $\alpha_3=\alpha_4=\frac{3}{4}$ and the base function $b:\Delta\to\mathbb{R}$ given by $b(x,y)=5+x^3 + y^2$. Then Figure~\ref{fig3} represents the corresponding non-affine fractal hypersurface.
 		\begin{figure}[ht]
 			\centering
 			\includegraphics[width=10cm, height=7cm]{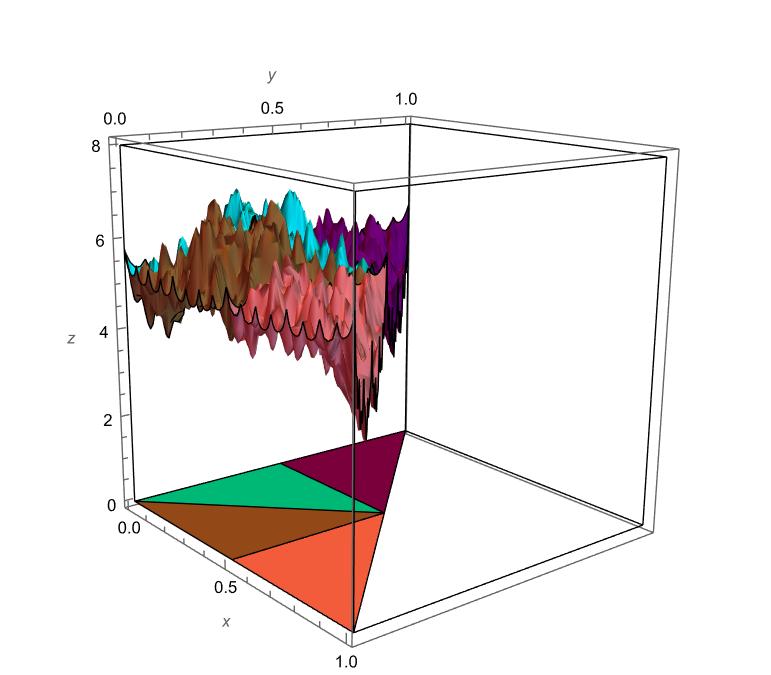}
 			\caption{Graphical representation of the non-affine fractal hypersurface for $k=1$.} \label{fig3}
 		\end{figure}
 	\end{example}
 	\begin{example} 
 		We consider a $2$-simplex $\Delta$ in $\mathbb{R}^2$ and the classical function $g$ and the base function $b$ given in Example~\ref{1stillus}.
 		 For $k=2$, vertex set is\\ $\big\{(0,0),(\frac{1}{4},0),(\frac{1}{2},0),(\frac{3}{4},0),(1,0),(\frac{3}{4},\frac{1}{4}),(\frac{1}{2},\frac{1}{2}),(\frac{1}{4},\frac{3}{4}),(0,1),(0,\frac{3}{4}),(0,\frac{1}{2}),(0,\frac{1}{4}),(\frac{1}{4},\frac{1}{4}),(\frac{1}{2},\frac{1}{4}),(\frac{1}{4},\frac{1}{2})\big\}$. For $i,j\in\{1,2,3,4\}$, consider the scale factors $\alpha_{ij}=\alpha_i\cdot\alpha_j$. Then  Figure~\ref{fig4} represents the corresponding non-affine fractal hypersurface.
 	\begin{figure}[ht]
 	\centering
 	\includegraphics[width=10cm, height=8cm]{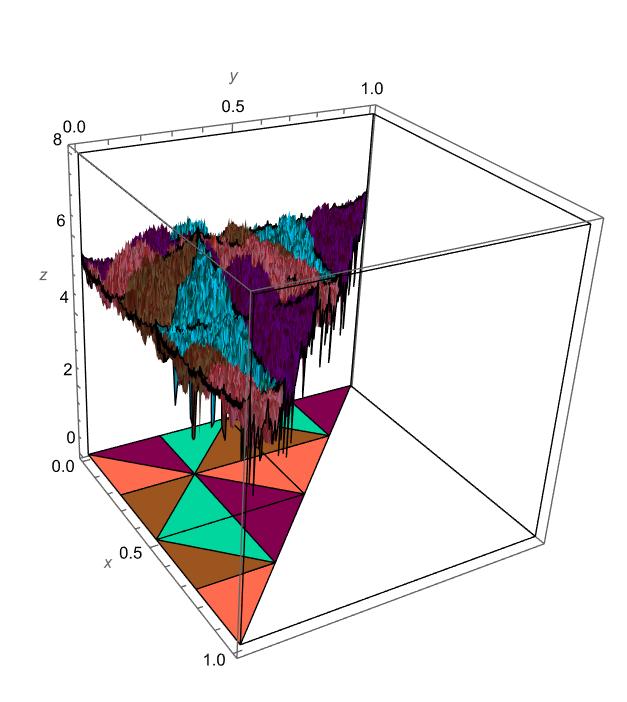}
 	\caption{Graphical representation of the non-affine fractal hypersurface for $k=2$.} \label{fig4}
 \end{figure}    
 \end{example}
 The following result can be found in \cite{Hutchinson1981}.
 \begin{proposition}\label{inmppp}
 	Let $(p_1,p_2,\ldots,p_N)$ be a given probability vector and $\{X; W_i:~i=1,2,\ldots, N\}$ be a hyperbolic IFS. Then there exists a unique Borel probability measure $\mu$ supported on its attractor such that
 	\begin{equation}
 		\mu=\sum_{ i=1}^Np_i\mu\circ W_i^{-1}.
 	\end{equation}
 \end{proposition}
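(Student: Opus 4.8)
The plan is to realize $\mu$ as the unique fixed point of a Markov operator acting on a complete metric space of probability measures, following Hutchinson. Let $F$ denote the compact attractor of the hyperbolic IFS $\{X;W_i\}$; its existence follows from applying the Banach fixed point theorem to the Hutchinson operator $W$ on the complete space $(\mathscr{H}(X),H_d)$. Since $W(F)=\bigcup_i W_i(F)=F$, each $W_i$ maps $F$ into $F$. I would therefore work in the space $\mathcal{P}(F)$ of Borel probability measures supported on $F$, equipped with the Monge--Kantorovich (Hutchinson) metric
\[
\rho(\mu,\nu)=\sup\left\{\left|\int_F\phi\,d\mu-\int_F\phi\,d\nu\right|:\ \phi:F\to\mathbb{R},\ \mathrm{Lip}(\phi)\le 1\right\},
\]
recalling that, because $F$ is compact, $(\mathcal{P}(F),\rho)$ is a complete metric space on which $\rho$ metrizes weak-$\ast$ convergence.

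Next I would introduce the Markov operator $M:\mathcal{P}(F)\to\mathcal{P}(F)$ given by $M(\nu)=\sum_{i=1}^{N}p_i\,\nu\circ W_i^{-1}$. Since $\sum_i p_i=1$ and $W_i(F)\subseteq F$, the operator $M$ is well defined and $M(\nu)$ is again a Borel probability measure on $F$. The fixed points of $M$ are precisely the measures satisfying the self-referential identity in the statement, so the proposition reduces to proving that $M$ has a unique fixed point.

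The heart of the argument is the contraction estimate. Using the change-of-variables identity $\int\phi\,d(\nu\circ W_i^{-1})=\int\phi\circ W_i\,d\nu$, for any $1$-Lipschitz $\phi$ I would write
\[
\int\phi\,dM\mu-\int\phi\,dM\nu=\sum_{i=1}^{N}p_i\left(\int\phi\circ W_i\,d\mu-\int\phi\circ W_i\,d\nu\right).
\]
Each $\phi\circ W_i$ is Lipschitz with constant at most $s_i\le s:=\max_i s_i<1$, so $\phi\circ W_i/s$ is again $1$-Lipschitz (the case $s=0$ being trivial), whence each bracketed term is bounded in absolute value by $s\,\rho(\mu,\nu)$. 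Summing, using $\sum_i p_i=1$, and taking the supremum over all $1$-Lipschitz $\phi$ yields $\rho(M\mu,M\nu)\le s\,\rho(\mu,\nu)$. Thus $M$ is a contraction on the complete space $(\mathcal{P}(F),\rho)$, and the Banach fixed point theorem produces a unique $\mu\in\mathcal{P}(F)$ with $M\mu=\mu$, which is by construction supported on $F$.

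The main obstacle is not the contraction estimate itself but the underlying functional-analytic setup: one must verify that $\rho$ is genuinely a metric that metrizes weak convergence and that $(\mathcal{P}(F),\rho)$ is complete. For compact $F$ this is the standard Kantorovich--Rubinstein duality, but in a non-compact Banach space $X$ one would instead have to restrict to measures of finite first moment to retain completeness. Finally, to upgrade uniqueness from ``among measures supported on $F$'' to the full statement, I would start from any $\delta_{x_0}$ with $x_0\in F$ and observe that the iterates $M^n\delta_{x_0}$ remain supported on $F$ and converge in $\rho$, hence weakly, to $\mu$; by the portmanteau theorem any weak limit of measures supported on the closed set $F$ is itself supported on $F$, so every fixed point of $M$ must live on $F$, closing the uniqueness gap.
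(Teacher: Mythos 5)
Your proposal is correct: it is precisely Hutchinson's classical argument (Markov operator on $\mathcal{P}(F)$ with the Monge--Kantorovich metric, contraction factor $s=\max_i s_i$, Banach fixed point theorem), which is exactly the proof the paper invokes by citing Hutchinson rather than reproducing it. Note only that the final paragraph about upgrading uniqueness beyond measures supported on $F$ is not required by the statement, which already restricts to measures supported on the attractor.
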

 Let $(p_1,p_2,\ldots,p_N)$ be a given probability vector and for ${\bf i}\in\{1,2,\ldots,N\}^k$, let \\$p_{\bf i}=p_{i_1}\cdot p_{i_2}\cdots p_{i_k}$, $i_j\in\{1,\ldots,N\}$ for $j=1,\ldots,k$. Let $\mu$ and $\mu_{\alpha}$ be the invariant probability measures with probability vector $\big(p_{\bf i}\big)_{{\bf i}\in\{1,2,\ldots,N\}^k}$, generated by the IFSs $\left\{\big(\Delta; L_{\bf i}\big):~{\bf i}\in\{1,2,\ldots,N\}^k\right\}$ and $\left\{\big(\Delta\times \mathbb{R}; W_{\bf i}\big):~{\bf i}\in\{1,2,\ldots,N\}^k\right\}$ given in (\ref{baseifs}) and (\ref{wisfr}) respectively. Then the support of $\mu$ is $\Delta$ and the support of $\mu_{\alpha}$ is $G(f^{\alpha})$ respectively.
We end this section by providing a relation between $\mu$ and $\mu_{\alpha}$ in the following theorem.
\begin{theorem}
    Let $S : \Delta \to G(f^{\alpha})$ be the homeomorphism given by $S(x) = \big(x, f^{\alpha}(x)\big)$, for $x\in\Delta$. Then 
   \begin{equation*}
       \mu(E)=\mu_{\alpha}\big(S(E)\big)
   \end{equation*}
   for all Borel subsets $E$ of $\Delta$.
 \end{theorem}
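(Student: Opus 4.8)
The plan is to recognize the asserted identity as the statement that $\mu_{\alpha}$ is the pushforward of $\mu$ under the coding homeomorphism $S$, and then to deduce this from the uniqueness half of Proposition~\ref{inmppp}. Concretely, I would define the Borel probability measure $\nu := S_{\ast}\mu$ on $\Delta\times\mathbb{R}$ by $\nu(F) = \mu\big(S^{-1}(F)\big)$ for every Borel set $F$. Since $S$ is a homeomorphism of $\Delta$ onto $G(f^{\alpha})$ and $\mathrm{supp}\,\mu = \Delta$, the measure $\nu$ is supported on $G(f^{\alpha})$. Taking $F = S(E)$ and using $S^{-1}(S(E)) = E$, the desired conclusion $\mu(E) = \mu_{\alpha}(S(E))$ for all Borel $E\subset\Delta$ is exactly the assertion $\nu = \mu_{\alpha}$. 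Thus it suffices to show that $\nu$ is invariant for the IFS $\big\{(\Delta\times\mathbb{R};W_{\bf i}):{\bf i}\in\{1,2,\ldots,N\}^k\big\}$ with probability vector $(p_{\bf i})$, because by Proposition~\ref{inmppp} such an invariant measure is unique.

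The engine of the argument is a \emph{conjugacy} relation between the two IFSs through $S$. First I would verify that $W_{\bf i}\circ S = S\circ L_{\bf i}$ for every ${\bf i}\in\{1,2,\ldots,N\}^k$. Indeed, for $x\in\Delta$ one has $W_{\bf i}(S(x)) = W_{\bf i}\big(x,f^{\alpha}(x)\big) = \big(L_{\bf i}(x),F_{\bf i}(x,f^{\alpha}(x))\big)$, and the identity $F_{\bf i}\big(x,f^{\alpha}(x)\big) = f^{\alpha}\big(L_{\bf i}(x)\big)$ — already established in the proof that $G(f^{\alpha})$ is the attractor — turns this into $\big(L_{\bf i}(x),f^{\alpha}(L_{\bf i}(x))\big) = S\big(L_{\bf i}(x)\big)$. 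Passing to preimages, the conjugacy yields the set-map identity $S^{-1}\circ W_{\bf i}^{-1} = L_{\bf i}^{-1}\circ S^{-1}$.

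With the conjugacy in hand, I would transfer the invariance of $\mu$ to $\nu$ directly. Starting from the self-referential equation $\mu = \sum_{\bf i} p_{\bf i}\,\mu\circ L_{\bf i}^{-1}$ (Proposition~\ref{inmppp} applied to $\big\{(\Delta;L_{\bf i})\big\}$), for an arbitrary Borel set $F\subset\Delta\times\mathbb{R}$ I compute
\[
\sum_{\bf i} p_{\bf i}\,\nu\big(W_{\bf i}^{-1}(F)\big)
= \sum_{\bf i} p_{\bf i}\,\mu\big(S^{-1}(W_{\bf i}^{-1}(F))\big)
= \sum_{\bf i} p_{\bf i}\,\mu\big(L_{\bf i}^{-1}(S^{-1}(F))\big)
= \mu\big(S^{-1}(F)\big) = \nu(F),
\]
using the conjugacy in the second equality and the invariance of $\mu$ in the third. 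Hence $\nu = \sum_{\bf i} p_{\bf i}\,\nu\circ W_{\bf i}^{-1}$, so $\nu$ is the unique invariant measure $\mu_{\alpha}$, which is the claim. All measurability requirements are automatic, since $S$, $L_{\bf i}$ and $W_{\bf i}$ are continuous and preimages of Borel sets under continuous maps are Borel.

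The argument is essentially formal once the conjugacy is in place, so I expect the only real obstacle to be verifying the functional identity $F_{\bf i}(x,f^{\alpha}(x)) = f^{\alpha}(L_{\bf i}(x))$ in the present multi-index notation — this is precisely the point at which the fixed-point equation~(\ref{srebfh}) for $f^{\alpha}$ enters — together with confirming that both $\nu$ and $\mu_{\alpha}$ are supported on $G(f^{\alpha})$ so that the uniqueness in Proposition~\ref{inmppp} genuinely pins down $\nu$. Both are light, but they are where the construction of $f^{\alpha}$ actually does the work.
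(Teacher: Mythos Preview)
Your proof is correct and rests on the same two ingredients as the paper's: the conjugacy $W_{\bf i}\circ S = S\circ L_{\bf i}$ (derived from the fixed-point identity $F_{\bf i}(x,f^{\alpha}(x))=f^{\alpha}(L_{\bf i}(x))$) and the uniqueness part of Proposition~\ref{inmppp}. The only cosmetic difference is directional: you push $\mu$ forward through $S$ and invoke uniqueness of $\mu_{\alpha}$, whereas the paper pulls $\mu_{\alpha}$ back through $S$ (via the map $\Phi\nu(E)=\nu(S(E))$) and invokes uniqueness of $\mu$.
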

 \begin{proof}
 Let $\mathscr{B}(G(f^{\alpha}))$ and $\mathscr{B}(\Delta)$ be the spaces of Borel probability measures supported on $G(f^{\alpha})$ and $\Delta$ respectively. Define the operator 
 \begin{align}\label{ptetf}
     \Phi:\mathscr{B}(G(f^{\alpha}))&\to \mathscr{B}(\Delta)\\
   \nonumber  ~~~~~~~\nu&\longrightarrow\Phi\nu
 \end{align}
 such that $\Phi\nu(E)=\nu\big(S(E)\big)$ for all Borel subsets $E$ of $\Delta$. Now, from Proposition~\ref{inmppp}, we get
 \begin{equation*}
        \mu_{\alpha}=\sum_{ {\bf i}\in\{1,2,\ldots,N\}^k}p_{\bf i}\mu_{\alpha}\circ W_{\bf i}^{-1}.
    \end{equation*}
    Therefore, for a Borel subset $E$ of $\Delta$
    \begin{align}\label{pestyh}
    \nonumber  \Phi\mu_{\alpha}(E)&=\mu_{\alpha}\big(S(E)\big)\\
     &=\sum_{  {\bf i}\in\{1,2,\ldots,N\}^k}p_{\bf i}\mu_{\alpha}\circ W_{\bf i}^{-1}\big(S(E)\big).
    \end{align}
    Now, for $x\in E$,
    \begin{align*}
        W_{\bf i}\big(L_{\bf i}^{-1}(x),f^{\alpha}(L_{\bf i}^{-1}(x))\big)&=\big(L_{\bf i}(L_{\bf i}^{-1}(x)),F_{\bf i}(L_{\bf i}^{-1}(x)),f^{\alpha}(L_{\bf i}^{-1}(x))\big),\quad\mbox{using (\ref{wisfr})}\\
        &=\big(x,\mathcal{T}f^{\alpha}(x)\big)\\
        &=\big(x,f^{\alpha}(x)\big),\quad\mbox{using (\ref{srebfh})}\\
        &=S(x).
    \end{align*}
    This shows that $W_{\bf i}^{-1}\big(S(x)\big)=\big(L_{\bf i}^{-1}(x),f^{\alpha}(L_{\bf i}^{-1}(x))\big)=S\big(L_{\bf i}^{-1}(x)\big)$, $x\in E$.
    Therefore, $W_{\bf i}^{-1}\big(S(E)\big)=S\big(L_{\bf i}^{-1}(E)\big)$. Hence from (\ref{pestyh}), we get
    \begin{align*}
        \Phi\mu_{\alpha}(E)&=\sum_{ {\bf i}\in\{1,2,\ldots,N\}^k}p_{\bf i}\mu_{\alpha}\circ S\big(L_{\bf i}^{-1}(E)\big)\\
        &=\sum_{ {\bf i}\in\{1,2,\ldots,N\}^k}p_{\bf i}\Phi\mu_{\alpha}\circ L_{\bf i}^{-1}(E),\quad \mbox{using (\ref{ptetf})}.
    \end{align*}
    But $\mu$ is the unique probability measure supported on $\Delta$ such that
    \begin{align*}
        \mu(E)=\sum_{ {\bf i}\in\{1,2,\ldots,N\}^k}p_{\bf i}\mu\circ L_{\bf i}^{-1}(E).
    \end{align*}
    Therefore, $\mu(E)=\Phi\mu_{\alpha}(E)=\mu_{\alpha}\big(S(E)\big)$. This completes the proof.
 \end{proof}
 \section{Dimension results} 
In this section, we explore the $\beta$-oscillation space (see \cite{Verma2023}) and derive bounds for the fractal dimension of the graph of a non-affine multivariate fractal function. Additionally, we provide an upper bound for the Hausdorff dimension of the invariant probability measure supported on the graph.\\
 
 For $N\geq 2$ and ${\bf i}=(i_1,i_2,\ldots,i_k)\in\{1,2,\ldots,N\}^k$, recall the definition of the sets $\Delta_{{\bf i}}:=L_{{\bf i}}(\Delta)=L_{i_1}\circ L_{i_2}\circ\cdots\circ L_{i_k}(\Delta)$. The maximal range of a function $g:\Delta_{\bf i}\to\mathbb{R}$ over the set $\Delta_{\bf i}$ is defined by
 \begin{align*}
     \mathcal{R}_g[{\Delta_{\bf i}}]=\sup_{x_1,x_2\in\Delta_{\bf i}}\vert g(x_1)-g(x_2)\vert
 \end{align*}
 and the total oscillation of order $k$ is defined by
 \begin{align*}
     \mathcal{R}(k,g)=\sum_{{\bf i}\in\{1,2,\ldots,N\}^k}\mathcal{R}_g[{\Delta_{\bf i}}].
 \end{align*}
 For $0\leq \beta\leq2$, define the function space
 \begin{align}
     \mathscr{R}^{\beta}(\Delta):=\left\{g:\Delta\to\mathbb{R}:~g ~\mbox{is continuous and}~\sup_{k\in \mathbb{N}}\frac{\mathcal{R}(k,g)}{N^{k(2-\beta)}}<\infty\right\}.
 \end{align}
 $\mathscr{R}^{\beta}(\Delta)$ is called the $\beta$-\textbf{oscillation space}. Here, we obtain the following results.
 \begin{proposition}\label{doaft}
     For all $g\in \mathscr{R}^{\beta}(\Delta)$, the following holds:
     \begin{enumerate}
         \item If $0\leq\beta\leq 1$, then 
         \begin{align*}
             2\leq \dim_HG(g)\leq \dim_BG(g)\leq 3-\beta.
         \end{align*}
         \item If $1<\beta\leq 2$, then 
         \begin{align*}
            \dim_HG(g)=\dim_BG(g)=2.
         \end{align*}
     \end{enumerate}
 \end{proposition}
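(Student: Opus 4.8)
The plan is to sandwich the two dimensions between a cheap lower bound and a covering upper bound, using the universally valid chain $\dim_H F\le\underline{\dim}_B F\le\overline{\dim}_B F$; the entire content sits in the upper box estimate, which the definition of $\mathscr{R}^{\beta}(\Delta)$ is tailored to feed. First I would dispose of the lower bound $\dim_H G(g)\ge 2$. The projection $\pi:\Delta\times\mathbb{R}\to\Delta$, $\pi(x,y)=x$, restricts to a Lipschitz surjection of $G(g)$ onto $\Delta$, and Lipschitz maps do not increase Hausdorff dimension; hence $2=\dim_H\Delta\le\dim_H G(g)$. This settles the left-hand inequality in case (1) and supplies the matching lower bound needed for the equality in case (2).

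For the upper bound I would use the level-$k$ partition $\{\Delta_{\bf i}:{\bf i}\in\{1,\dots,N\}^k\}$ as a covering mesh. Each $\Delta_{\bf i}$ is similar to $\Delta$ with ratio $c^k$, so it has diameter $\delta_k:=c^k\,\operatorname{diam}(\Delta)$ and, crucially, uniformly bounded eccentricity (inradius comparable to diameter, uniformly in ${\bf i}$, because all pieces are congruent copies of a single fixed shape). Consequently its planar footprint is covered by $O(1)$ squares of side $\delta_k$, while over $\Delta_{\bf i}$ the graph sits inside a vertical slab of height exactly $\mathcal{R}_g[\Delta_{\bf i}]$. Stacking $\lceil\mathcal{R}_g[\Delta_{\bf i}]/\delta_k\rceil$ cubes of side $\delta_k$ above each footprint square and summing over the $N^k$ pieces gives the key estimate
\[
N_{\delta_k}\big(G(g)\big)\ \lesssim\ \frac{\mathcal{R}(k,g)}{\delta_k}+N^k .
\]

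Finally I would substitute the defining bound of $\mathscr{R}^{\beta}(\Delta)$ into this estimate and pass to the $\limsup$. Recording that the partition tiles the planar simplex, so that $N^k\asymp\delta_k^{-2}$, the base term contributes the exponent $\frac{\log N^k}{-\log\delta_k}=2$, while the oscillation term $\mathcal{R}(k,g)/\delta_k$ contributes precisely the exponent $3-\beta$ that the normalization of the $\beta$-oscillation space is designed to produce. Hence $\overline{\dim}_B G(g)\le\max\{3-\beta,\,2\}$, which equals $3-\beta$ when $0\le\beta\le1$ (the oscillation-dominated regime) and equals $2$ when $1<\beta\le2$ (the base-dominated regime). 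Combined with the lower bound $\dim_H G(g)\ge 2$, this proves the two-sided inequality of case (1) and forces the equality $\dim_H G(g)=\dim_B G(g)=2$ in case (2), since there $2\le\dim_H\le\underline{\dim}_B\le\overline{\dim}_B\le2$. A routine point to tidy up is that the $\limsup$ defining the box dimension runs over all $\delta\to0$, not only the geometric scales $\delta_k$; but $\delta_{k+1}/\delta_k=c$ is constant and $N_\delta$ is monotone, so interpolating between consecutive scales does not change the value.

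I expect the main obstacle to be the covering estimate itself rather than the subsequent logarithmic bookkeeping: one must justify that $O(1)$ side-$\delta_k$ squares genuinely suffice to cover each footprint $\Delta_{\bf i}$ (this is exactly where the uniform fatness of the mutually similar pieces enters) and that the vertical count driven by $\mathcal{R}_g[\Delta_{\bf i}]$ is both valid as an upper bound and tight enough that the control of $\mathcal{R}(k,g)$ afforded by membership in $\mathscr{R}^{\beta}(\Delta)$ transfers cleanly to $\overline{\dim}_B G(g)$. Once the displayed inequality is in hand, the case split is purely a comparison of the two exponents $3-\beta$ and $2$.
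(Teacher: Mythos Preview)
Your overall plan coincides with the paper's: the lower bound comes from the Lipschitz projection of $G(g)$ onto $\Delta$, and the upper bound from counting vertical stacks of boxes over the level-$k$ pieces $\Delta_{\bf i}$, with each stack's height governed by $\mathcal{R}_g[\Delta_{\bf i}]$; the two cases then amount to comparing the exponents contributed by the base term and the oscillation term.

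There is, however, a genuine gap at the sentence ``the oscillation term $\mathcal{R}(k,g)/\delta_k$ contributes precisely the exponent $3-\beta$.'' With your own choices this does not hold. You set $\delta_k=c^k\operatorname{diam}(\Delta)$ and recorded $N^k\asymp\delta_k^{-2}$ (equivalently $c=N^{-1/2}$, which is what tiling in the plane forces); hence $N^{k(2-\beta)}\asymp\delta_k^{-2(2-\beta)}$ and
\[
\frac{\mathcal{R}(k,g)}{\delta_k}\ \lesssim\ \frac{N^{k(2-\beta)}}{\delta_k}\ \asymp\ \delta_k^{-(5-2\beta)},
\]
so the exponent your displayed inequality actually delivers is $5-2\beta$, not $3-\beta$. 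For $0<\beta\le 1$ this is strictly weaker than the claimed bound (indeed vacuous once one recalls $\overline{\dim}_B G(g)\le 3$), and for $1<\beta<\tfrac{3}{2}$ it still exceeds $2$, so part~(2) is not yet proved either. The paper does not run into this mismatch because it takes the mesh height to be $\delta=N^{-k}$ (not the piece diameter $c^k$) and computes $\log N_\delta/\log N^k$ directly; the normalisation $N^{k(2-\beta)}$ in the definition of $\mathscr{R}^\beta(\Delta)$ is calibrated to \emph{that} scale, which is what produces $3-\beta$. To repair your argument you must either adopt the paper's scale convention or rework the relationship between $\mathcal{R}(k,g)$ and your $\delta_k$ so that the exponent genuinely comes out to $3-\beta$; as written, it does not.
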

 \begin{proof}
     Let $g\in \mathscr{R}^{\beta}(\Delta)$. Then by definition of $\beta$-oscillation space, there exists a real $M>0$ such that 
     \begin{align}\label{sbfav}
         \sup_{k\in \mathbb{N}}\frac{\mathcal{R}(k,g)}{N^{k(2-\beta)}}\leq M.
     \end{align}
     For a fixed $k\in\mathbb{N}$, let $\delta=\frac{1}{N^k}$. Then by the continuity of $g$, the number of mesh-prism of side lengths $\delta$ in the column above the set $\Delta_{\bf i}$ that intersects $G(g)$ is at least $\frac{\mathcal{R}_g[{\Delta_{\bf i}}]}{\delta}$ and at most $2+\frac{\mathcal{R}_g[{\Delta_{\bf i}}]}{\delta}$. Summing over all such set $\Delta_{\bf i}$, we get
     \begin{align}\label{gcnf}
        N^k\sum_{{\bf i}\in\{1,2,\ldots,N\}^k}\mathcal{R}_g[{\Delta_{\bf i}}]\leq N_{\delta}\big(G(g)\big)\leq 2N^k+N^k\sum_{{\bf i}\in\{1,2,\ldots,N\}^k}\mathcal{R}_g[{\Delta_{\bf i}}].
     \end{align}
    Now since $N\geq 2$, using (\ref{sbfav}) and (\ref{gcnf}), we get
    \begin{align*}
        N_{\delta}\big(G(g)\big)&\leq N^{2k}+N^k M N^{k(2-\beta)}\\
        &=N^{k(3-\beta)}\big(M+N^{k(\beta-1)}\big).
    \end{align*}
    Therefore,
    \begin{align*}
        \dim_BG(g)=\lim_{k\to\infty}\frac{\log N_{\delta}\big(G(g)\big)}{\log N^k}\leq 3-\beta+\lim_{k\to\infty}\frac{\log\big(M+N^{k(\beta-1)}\big)}{\log N^k}.
        \end{align*}
        {\bf Case 1.} If $0\leq\beta\leq 1$, then $N^{k(\beta-1)}\to 0$, which implies that
        \begin{align*}
            \lim_{k\to\infty}\frac{\log\big(M+N^{k(\beta-1)}\big)}{\log N^k}=0.
        \end{align*}
        Also, since $\dim_H\Delta=2$ and $G(g)$ is the graph of the continuous function $g:\Delta\to\mathbb{R}$, it follows that $\dim_HG(g)\geq 2$.
        Therefore, 
         \begin{align*}
             2\leq \dim_HG(g)\leq \dim_BG(g)\leq 3-\beta.
         \end{align*}
        {\bf Case 2.} If $1<\beta\leq 2$, then $\frac{1}{N^{k(\beta-1)}}\to 0$, which implies that
        \begin{align*}
             \lim_{k\to\infty}\frac{\log\big(M+N^{k(\beta-1)}\big)}{\log N^k}= \lim_{k\to\infty}\frac{\log N^{k(\beta-1)}\left(\frac{M}{N^{k(\beta-1)}}+1\right)}{\log N^k}=\beta-1.
        \end{align*}
        This shows that
      \begin{align*}
             2\leq \dim_HG(g)\leq \dim_BG(g)\leq 3-\beta+\beta-1=2.
         \end{align*} 
         Therefore, 
       \begin{align*}
              \dim_HG(g)=\dim_BG(g)=2.
         \end{align*}
         This completes the proof.
 \end{proof}
In Section~\ref{conafhsfr}, we considered the original function $g$ and the base function $b$ in $\mathcal{C}_{0}(\Delta)$. From this point onwards, we consider the original function  $g$ and the base function $b$ in $\mathscr{R}^{\beta}(\Delta)$, and take $f^\alpha$ to be the corresponding non-affine fractal function. In the following theorem, we estimate the fractal dimension of $G(f^\alpha)$ and provide bounds for the Hausdorff dimension of the measure $\mu_{\alpha}$.
 \begin{theorem}\label{hypdimthem}
  Suppose, for a fixed $k\in\mathbb{N}$, that  $\max\left\{ \alpha^k_{\infty}, ~\frac{ \alpha^k_{\infty}}{N^{k(1-\beta)}}\right\} <1$. Then the non-affine fractal function $f^\alpha$ lies in the  $\beta$-oscillation space $\mathscr{R}^{\beta}(\Delta)$. Furthermore, the following holds:
  \begin{enumerate}
  	\item If $0\leq\beta\leq 1$, then 
  	\begin{align*}
  		2\leq \dim_HG(f^\alpha)\leq \dim_BG(f^\alpha)\leq 3-\beta,~\mbox{and}~\dim_H\mu_{\alpha} \leq 3-\beta. 
  	\end{align*}
  	\item If $1<\beta\leq 2$, then 
  	\begin{align*}
  	\dim_HG(f^\alpha)=\dim_BG(f^\alpha)=2,~\mbox{and}~\dim_H\mu_{\alpha} \leq 2. 
  	\end{align*}
  \end{enumerate} 
 \end{theorem}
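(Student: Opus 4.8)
The plan is to reduce the entire statement to a single membership claim, namely that the fractal function $f^\alpha$ belongs to the $\beta$-oscillation space $\mathscr{R}^\beta(\Delta)$. Once that is in hand, the graph bounds in both ranges of $\beta$ follow verbatim from Proposition~\ref{doaft} applied to $f^\alpha$, and the measure bound follows from an elementary support argument. So the whole weight of the proof sits on controlling the total oscillations $\mathcal{R}(p,f^\alpha)$, and the hypotheses on $g,b\in\mathscr{R}^\beta(\Delta)$ (assumed from this point onward) together with the scaling condition must be made to cooperate.

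The first step is to exploit the self-referential structure of $f^\alpha$ through the functional equation (\ref{srebfh}). Fix a level $p\ge 0$ and write a length-$(k+p)$ multi-index as a concatenation ${\bf i}{\bf j}$ with ${\bf i}\in\{1,\ldots,N\}^k$ and ${\bf j}\in\{1,\ldots,N\}^p$, so that $\Delta_{{\bf i}{\bf j}}=L_{\bf i}(\Delta_{\bf j})$ and hence $L_{\bf i}^{-1}(\Delta_{{\bf i}{\bf j}})=\Delta_{\bf j}$. On $\Delta_{\bf i}$ equation (\ref{srebfh}) reads $f^\alpha=g+\alpha_{\bf i}(f^\alpha-b)\circ L_{\bf i}^{-1}$, so for any two points of $\Delta_{{\bf i}{\bf j}}$ the triangle inequality together with subadditivity of the maximal range gives
\[
\mathcal{R}_{f^\alpha}[\Delta_{{\bf i}{\bf j}}]\le \mathcal{R}_g[\Delta_{{\bf i}{\bf j}}]+\lvert\alpha_{\bf i}\rvert\big(\mathcal{R}_{f^\alpha}[\Delta_{\bf j}]+\mathcal{R}_b[\Delta_{\bf j}]\big).
\]
Summing over all ${\bf i}$ and ${\bf j}$, and using $\sum_{{\bf i}\in\{1,\ldots,N\}^k}\lvert\alpha_{\bf i}\rvert=\big(\sum_{i=1}^N\lvert\alpha_i\rvert\big)^k\le N^k\alpha_\infty^k$, produces the recursion
\[
\mathcal{R}(k+p,f^\alpha)\le \mathcal{R}(k+p,g)+N^k\alpha_\infty^k\big(\mathcal{R}(p,f^\alpha)+\mathcal{R}(p,b)\big).
\]

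The second step is to normalise. Setting $r_p=\mathcal{R}(p,f^\alpha)/N^{p(2-\beta)}$ and bounding $\mathcal{R}(p,g),\mathcal{R}(p,b)\le \mathrm{const}\cdot N^{p(2-\beta)}$ via $g,b\in\mathscr{R}^\beta(\Delta)$, I would divide the recursion by $N^{(k+p)(2-\beta)}$; the factor $N^k\alpha_\infty^k/N^{k(2-\beta)}$ collapses to exactly $\theta:=\alpha_\infty^k/N^{k(1-\beta)}$, giving $r_{k+p}\le C_0+\theta\,r_p$ with $C_0$ independent of $p$. This is precisely where the hypothesis $\max\{\alpha_\infty^k,\alpha_\infty^k/N^{k(1-\beta)}\}<1$ enters, since it forces $\theta<1$. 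Iterating within each residue class modulo $k$, starting from the finitely many base values $r_1,\dots,r_k$ (finite by continuity of $f^\alpha$ on the compact set $\Delta$), yields the geometric bound $\sup_p r_p\le \max_{1\le s\le k}r_s+C_0/(1-\theta)<\infty$, so $f^\alpha\in\mathscr{R}^\beta(\Delta)$.

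Finally, Proposition~\ref{doaft} applied to $f^\alpha$ delivers the stated estimates for $\dim_H G(f^\alpha)$ and $\dim_B G(f^\alpha)$ in both cases. For the measure, since $\mu_\alpha$ is a Borel probability measure supported on $G(f^\alpha)$, the set $G(f^\alpha)$ is Borel with $\mu_\alpha\big(G(f^\alpha)\big)=1>0$, so the definition of $\dim_H\mu_\alpha$ gives $\dim_H\mu_\alpha\le \dim_H G(f^\alpha)$, which is $\le 3-\beta$ when $0\le\beta\le 1$ and $\le 2$ when $1<\beta\le 2$. I expect the main obstacle to be the bookkeeping in the first two steps: setting up the block-of-length-$k$ splitting of multi-indices correctly and tracking the powers of $N$ so that the normalisation reproduces $\theta$ exactly. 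Everything after that is a routine geometric-series estimate and a direct appeal to earlier results.
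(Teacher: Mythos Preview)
Your proof is correct but follows a genuinely different route from the paper's. The paper establishes $f^\alpha\in\mathscr{R}^\beta(\Delta)$ by an \emph{abstract fixed-point argument}: it equips $\mathscr{R}^\beta(\Delta)$ with the norm $\lVert g\rVert_{\mathscr{R}^\beta(\Delta)}=\lVert g\rVert_{\infty,\Delta}+\sup_m\mathcal{R}(m,g)/N^{m(2-\beta)}$, proves in a separate lemma that this is a Banach space, restricts to the closed subset $\mathscr{R}^\beta_{f^\alpha}(\Delta)$ of functions matching $f^\alpha$ on $Z_k$, and then shows that the RB-operator $\mathcal{T}$ of (\ref{rboptr}) is a contraction there with ratio exactly $\max\{\alpha_\infty^k,\alpha_\infty^k/N^{k(1-\beta)}\}$; the Banach fixed-point theorem and uniqueness against (\ref{srebfh}) then force the fixed point to be $f^\alpha$. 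You instead take the \emph{direct} route: plug the functional equation into the oscillations of $f^\alpha$ itself, obtain the recursion $r_{k+p}\le C_0+\theta\,r_p$, and iterate. Both arguments rest on the same core computation---splitting a word of length $k+p$ into a block of length $k$ and a tail, so that the scaling factor $N^k\alpha_\infty^k/N^{k(2-\beta)}$ collapses to $\theta=\alpha_\infty^k/N^{k(1-\beta)}$---but your packaging is more elementary (no completeness of the oscillation space is needed, only finiteness of the finitely many seeds $r_1,\dots,r_k$), while the paper's is more structural and reusable. The derivation of the dimension bounds from Proposition~\ref{doaft} and of the measure bound from $\mu_\alpha\big(G(f^\alpha)\big)>0$ is identical in both.
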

 To prove the above theorem, we will first establish the following results:\\
 For $g\in \mathscr{R}^{\beta}(\Delta)$, define a norm 
 \begin{equation}\label{betanorm}
 \lVert g\rVert_{\mathscr{R}^{\beta}(\Delta)}:=\lVert g\rVert_{\infty,\Delta}+ \sup_{k\in \mathbb{N}}\frac{\mathcal{R}(k,g)}{N^{k(2-\beta)}}.
 \end{equation}
 It is easily shown that this defines a norm. Indeed,
 \begin{enumerate}
 	\item $\lVert g\rVert_{\mathscr{R}^{\beta}(\Delta)}=0$, if and only if $\lVert g\rVert_{\infty,\Delta}=0$, if and only if $g=0$.
 	\item Now, for $c\in\mathbb{R}$ and $g\in \mathscr{R}^{\beta}(\Delta)$,  $\mathcal{R}_{cg}[{\Delta_{\bf i}}]=\lvert c\rvert \mathcal{R}_{g}[{\Delta_{\bf i}}]$, therefore, $\mathcal{R}(k,cg)=\lvert c\rvert\mathcal{R}(k,g)$. Hence $\lVert cg\rVert_{\mathscr{R}^{\beta}(\Delta)}= \lvert c\rvert\lVert g\rVert_{\mathscr{R}^{\beta}(\Delta)}$.
 	\item For $f,g\in \mathscr{R}^{\beta}(\Delta)$, $\mathcal{R}_{f+g}[{\Delta_{\bf i}}]\leq \mathcal{R}_{f}[{\Delta_{\bf i}}]+\mathcal{R}_{g}[{\Delta_{\bf i}}]$. Therefore, $\mathcal{R}(k,f+g)\leq \mathcal{R}(k,f)+\mathcal{R}(k,g)$. Hence from (\ref{betanorm}), we get $\lVert f+g\rVert_{\mathscr{R}^{\beta}(\Delta)}\leq\lVert f\rVert_{\infty,\Delta}+\lVert g\rVert_{\infty,\Delta}+ \sup_{k\in \mathbb{N}}\frac{\mathcal{R}(k,f)}{N^{k(2-\beta)}}+\sup_{k\in \mathbb{N}}\frac{\mathcal{R}(k,g)}{N^{k(2-\beta)}}.$ Therefore, $\lVert f+g\rVert_{\mathscr{R}^{\beta}(\Delta)}\leq \lVert f\rVert_{\mathscr{R}^{\beta}(\Delta)}+\lVert g\rVert_{\mathscr{R}^{\beta}(\Delta)}$.
 \end{enumerate}
 \begin{lemma}
 	The space $\big(\mathscr{R}^{\beta}(\Delta),\lVert \cdot\rVert_{\mathscr{R}^{\beta}(\Delta)}\big)$ is a Banach space.
 \end{lemma}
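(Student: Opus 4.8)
The plan is to establish completeness directly from the definition: I take a Cauchy sequence $(g_n)$ in $\big(\mathscr{R}^{\beta}(\Delta),\lVert\cdot\rVert_{\mathscr{R}^{\beta}(\Delta)}\big)$ and produce a limit $g$ lying in $\mathscr{R}^{\beta}(\Delta)$ to which $(g_n)$ converges in this norm. The first step exploits the decomposition $\lVert g\rVert_{\mathscr{R}^{\beta}(\Delta)}=\lVert g\rVert_{\infty,\Delta}+\sup_{k}\mathcal{R}(k,g)/N^{k(2-\beta)}$: since $\lVert\cdot\rVert_{\infty,\Delta}\leq\lVert\cdot\rVert_{\mathscr{R}^{\beta}(\Delta)}$, the sequence $(g_n)$ is also Cauchy in $\big(\mathcal{C}(\Delta),\lVert\cdot\rVert_{\infty,\Delta}\big)$, which is complete. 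Hence there is a continuous $g:\Delta\to\mathbb{R}$ with $g_n\to g$ uniformly, and this $g$ is the only candidate for the limit.

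The second step records the single analytic fact that drives the whole argument: for each fixed $k$, the order-$k$ total oscillation $\mathcal{R}(k,\cdot)$ is continuous with respect to $\lVert\cdot\rVert_{\infty,\Delta}$. Indeed, each $\mathcal{R}_{\cdot}[\Delta_{\bf i}]$ is a seminorm, so the reverse triangle inequality gives $\lvert\mathcal{R}_{g}[\Delta_{\bf i}]-\mathcal{R}_{g_n}[\Delta_{\bf i}]\rvert\leq\mathcal{R}_{g-g_n}[\Delta_{\bf i}]\leq 2\lVert g-g_n\rVert_{\infty,\Delta}$, and summing over the $N^{k}$ multi-indices ${\bf i}$ yields $\lvert\mathcal{R}(k,g)-\mathcal{R}(k,g_n)\rvert\leq 2N^{k}\lVert g-g_n\rVert_{\infty,\Delta}\to 0$. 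Using the boundedness of a Cauchy sequence, say $\lVert g_n\rVert_{\mathscr{R}^{\beta}(\Delta)}\leq C$ for all $n$, I then obtain $\mathcal{R}(k,g)=\lim_n\mathcal{R}(k,g_n)\leq C\,N^{k(2-\beta)}$ for every $k$; taking the supremum over $k$ shows $\sup_k\mathcal{R}(k,g)/N^{k(2-\beta)}\leq C<\infty$, so that $g\in\mathscr{R}^{\beta}(\Delta)$.

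Finally I prove convergence in the full norm. Given $\varepsilon>0$, choose $M$ so that $\lVert g_n-g_m\rVert_{\mathscr{R}^{\beta}(\Delta)}<\varepsilon$ for $m,n\geq M$; in particular $\mathcal{R}(k,g_n-g_m)/N^{k(2-\beta)}<\varepsilon$ for all $k$. Fixing $n\geq M$ and a single $k$, I let $m\to\infty$ and apply the continuity from the second step (to $g_n-g_m\to g_n-g$) to deduce $\mathcal{R}(k,g_n-g)/N^{k(2-\beta)}\leq\varepsilon$; since this bound is uniform in $k$, the supremum over $k$ is $\leq\varepsilon$, and likewise $\lVert g_n-g\rVert_{\infty,\Delta}\leq\varepsilon$. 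Hence $\lVert g_n-g\rVert_{\mathscr{R}^{\beta}(\Delta)}\leq 2\varepsilon$ for $n\geq M$, which establishes completeness. The main obstacle is precisely the interchange of $\sup_k$ with the limits in $n$ and $m$: one cannot pass to the limit inside the supremum directly, and the resolution is to fix $k$ first, where $\mathcal{R}(k,\cdot)$ is a finite sum and therefore sup-norm continuous, derive a bound uniform in $k$, and only then take the supremum.
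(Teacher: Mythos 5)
Your proof is correct, and it follows the same overall skeleton as the paper's: pass to the sup-norm to extract a uniform limit, then analyze the oscillation part of the norm via the continuity of $\mathcal{R}(k,\cdot)$ at each fixed $k$. The difference is that your version actually closes the argument where the paper's does not. The paper only establishes the pointwise-in-$k$ convergence $\mathcal{R}(k,f_n)\to\mathcal{R}(k,f)$ and then concludes with $\lVert f_n\rVert_{\mathscr{R}^{\beta}(\Delta)}\to\lVert f\rVert_{\mathscr{R}^{\beta}(\Delta)}$, which is problematic on two counts: convergence for each fixed $k$ does not by itself justify passing the limit through $\sup_{k}$, and convergence of the norms is in any case not what completeness requires --- one needs $f\in\mathscr{R}^{\beta}(\Delta)$ and $\lVert f_n-f\rVert_{\mathscr{R}^{\beta}(\Delta)}\to 0$. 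You supply exactly these two missing pieces: membership of the limit via boundedness of the Cauchy sequence (giving $\mathcal{R}(k,g)\le C\,N^{k(2-\beta)}$ uniformly in $k$), and norm convergence via the quantitative seminorm estimate $\lvert\mathcal{R}(k,g)-\mathcal{R}(k,g_n)\rvert\le 2N^{k}\lVert g-g_n\rVert_{\infty,\Delta}$ applied to $g_n-g_m$ with $k$ fixed before taking the supremum. Your closing remark correctly identifies the interchange of $\sup_k$ with the limits as the crux, which is precisely the point the paper's proof glosses over; your argument is the one that should be recorded.
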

 \begin{proof}
 	Let $\big(f_n\big)_{n\in\mathbb{N}}$ be a Cauchy sequence in $\big(\mathscr{R}^{\beta}(\Delta),\lVert \cdot\rVert_{\mathscr{R}^{\beta}(\Delta)}\big)$. Then $\big(f_n\big)_{n\in\mathbb{N}}$ is a Cauchy sequence in $\big(\mathcal{C}(\Delta),\lVert \cdot\rVert_{\infty,\Delta}\big)$, and hence converges to a continuous function $f$. Our first claim is $\mathcal{R}(k,f_n)\to \mathcal{R}(k,f)$ as $n\to\infty$. Since $f_n\to f$ uniformly, it follows that
 	\begin{align*}
 		\lvert f_n(x_1)-f_n(x_2)\rvert\to \lvert f(x_1)-f(x_2)\rvert\quad\mbox{for all}~ x_1,x_2\in \Delta_{\bf i},~{\bf i}\in\{1,2,\ldots,N\}^k.
 	\end{align*}
 	This shows that 
 	\begin{align*}
 	\sup_{x_1,x_2\in\Delta_{\bf i}}\lvert f_n(x_1)-f_n(x_2)\rvert\to \sup_{x_1,x_2\in\Delta_{\bf i}}\lvert f(x_1)-f(x_2)\rvert.
 	\end{align*}
 	Therefore, $\mathcal{R}_{f_n}[{\Delta_{\bf i}}]\to\mathcal{R}_{f}[{\Delta_{\bf i}}]$. Hence $\mathcal{R}(k,f_n)\to \mathcal{R}(k,f)$. Therefore, $\lVert f_n\rVert_{\mathscr{R}^{\beta}(\Delta)}=\lVert f_n\rVert_{\infty,\Delta}+ \sup_{k\in \mathbb{N}}\frac{\mathcal{R}(k,f_n)}{N^{k(2-\beta)}}\to \lVert f\rVert_{\infty,\Delta}+ \sup_{k\in \mathbb{N}}\frac{\mathcal{R}(k,f)}{N^{k(2-\beta)}}=\lVert f\rVert_{\mathscr{R}^{\beta}(\Delta)}$. This completes the proof.
 \end{proof}
 \begin{proof}[Proof of Theorem \ref{hypdimthem}]
 	Let $\mathscr{R}^{\beta}_{f^\alpha}(\Delta):=\left\{f\in\mathscr{R}^{\beta}(\Delta):~f\rvert_{Z_k}=f^\alpha\rvert_{Z_k}\right\}$, where $Z_k=\bigcup_{{\bf i}\in\{1,2,\ldots,N\}^k}V_{\bf i}$. It is easy to show that $\mathscr{R}^{\beta}_{f^\alpha}(\Delta)$ is a closed subset of $\mathscr{R}^{\beta}(\Delta)$, and so it is complete with respect to the metric induced by the norm $\lVert \cdot\rVert_{\mathscr{R}^{\beta}(\Delta)}$. Define  the RB-operator  $\mathcal{T}:\mathscr{R}^{\beta}_{f^\alpha}(\Delta)\to \mathscr{R}^{\beta}_{f^\alpha}(\Delta)$, in an analogous way to (\ref{rboptr}), by
 	\begin{equation}\label{2rbtrf}
 		\mathcal{T}(f)=\sum_{{\bf i}\in\{1,2,\ldots,N\}^k} g\cdot\chi_{\Delta_{\bf i}}+\sum_{{\bf i}\in\{1,2,\ldots,N\}^k}\alpha_{\bf i}\big(f-b\big)\circ L_{\bf i}^{-1}\cdot\chi_{\Delta_{\bf i}},
 	\end{equation}
 	where for $k\in\mathbb{N}$, $\Delta=\bigcup_{{\bf i}\in\{1,2,\ldots,N\}^k}\Delta_{{\bf i}}$, $\alpha_{\bf i}=\alpha_{i_1}\alpha_{i_2}\cdots\alpha_{i_k}$ and $b\in \mathscr{R}^{\beta}_{f^\alpha}(\Delta)$ such that $b\neq g$. Let $v_k\in Z_k$, then $v_k\in\Delta_{\bf i}$ for some ${\bf i}\in\{1,2,\ldots,N\}^k$. Using (\ref{vstcnt}) in (\ref{2rbtrf}), we get
 	\begin{align*}
 		\mathcal{T}(f)(v_k)&=g(v_k)+\alpha_i\big(f-b\big)(l(v_k))\\
 		&=z_{v_k}+\alpha_i\big(z_{l_k(v_k)}-z_{l_k(v_k)}\big)=f^\alpha(v_k).
 	\end{align*}
 	Therefore, $\mathcal{T}(f)\rvert_{Z_k}=f^\alpha\rvert_{Z_k}$.
 	Also, for $x\in E_{{\bf ij}}=\Delta_{\bf i}\cap\Delta_{\bf j}$, from (\ref{2rbtrf}) and (\ref{edgrtg}), we get
 	\begin{equation*}
 		\mathcal{T}(f)(x)=g(x).
 	\end{equation*}
 	This shows that $\mathcal{T}$ is well defined. To check contractivity of $\mathcal{T}$, let $f_1,f_2\in \mathscr{R}^{\beta}_{f^\alpha}(\Delta)$. Then 
 	\begin{align}\label{tgsue}
 \nonumber		 \lVert \mathcal{T}f_1-\mathcal{T}f_2\rVert_{\mathscr{R}^{\beta}(\Delta)}&=\lVert \mathcal{T}f_1-\mathcal{T}f_2\rVert_{\infty,\Delta}+ \sup_{m\in \mathbb{N}}\frac{\mathcal{R}(m,\mathcal{T}f_1-\mathcal{T}f_2)}{N^{m(2-\beta)}}\\
 		 &=\lVert \mathcal{T}f_1-\mathcal{T}f_2\rVert_{\infty,\Delta}+\sup_{m\in \mathbb{N}}\frac{\sum_{{\bf i}\in\{1,2,\ldots,N\}^m}\mathcal{R}_{\mathcal{T}f_1-\mathcal{T}f_2}[{\Delta_{\bf i}}]}{N^{m(2-\beta)}}.
 	\end{align}
 	First, we estimate the quantity $\sum_{{\bf i}\in\{1,2,\ldots,N\}^m}\mathcal{R}_{\mathcal{T}f_1-\mathcal{T}f_2}[{\Delta_{\bf i}}]$. For any $m>k$, the word ${\bf i}\in\{1,2,\ldots,N\}^m$ may be expressed as ${\bf i}={\bf (j_1,j_2)}$, where ${\bf j_1}=(j_1,j_2,\ldots,j_k)\in\{1,2,\ldots,N\}^k$ and ${\bf j_2}=(j_{k+1},j_{k+2},\ldots,j_{m})\in\{1,2,\ldots,N\}^{m-k}$. Now, using the expression of ${\bf i}={\bf (j_1,j_2)}$, we may write
 	\begin{align*}
 		\sum&_{{\bf i}\in\{1,2,\ldots,N\}^m}\mathcal{R}_{\mathcal{T}f_1-\mathcal{T}f_2}[{\Delta_{\bf i}}]\\
 		&=\sum_{{\bf i}\in\{1,2,\ldots,N\}^m}\sup_{x_1,x_2\in\Delta_{\bf i}=L_{\bf i}(\Delta)}\vert \big(\mathcal{T}f_1-\mathcal{T}f_2\big)(x_1)-\big(\mathcal{T}f_1-\mathcal{T}f_2\big)(x_2)\vert\\
 		&=\sum_{{\bf i}\in\{1,2,\ldots,N\}^m}\sup_{x_1,x_2\in\Delta}\vert \big(\mathcal{T}f_1-\mathcal{T}f_2\big)\circ L_{\bf i}(x_1)-\big(\mathcal{T}f_1-\mathcal{T}f_2\big)\circ L_{\bf i}(x_2)\vert\\
 		&=\sum_{{\bf j_2}\in\{1,2,\ldots,N\}^{m-k}}\sum_{{\bf j_1}\in\{1,2,\ldots,N\}^{k}}\sup_{x_1,x_2\in\Delta}\vert \big(\mathcal{T}f_1-\mathcal{T}f_2\big)\circ L_{\bf j_1}\circ L_{\bf j_2}(x_1)-\big(\mathcal{T}f_1-\mathcal{T}f_2\big)\circ  L_{\bf j_1}\circ L_{\bf j_2}(x_2)\vert.
 	\end{align*}
 Now, for the partition $\Delta=\bigcup_{{\bf j_1}\in\{1,2,\ldots,N\}^{k}}\Delta_{{\bf j_1}}$, using (\ref{2rbtrf}) in the last expression, we get
 \begin{align*}
 	&\sum_{{\bf i}\in\{1,2,\ldots,N\}^m}\mathcal{R}_{\mathcal{T}f_1-\mathcal{T}f_2}[{\Delta_{\bf i}}]\\
	&=\sum_{{\bf j_2}\in\{1,2,\ldots,N\}^{m-k}}\sum_{{\bf j_1}\in\{1,2,\ldots,N\}^{k}}\lvert\alpha_{\bf j_1}\rvert\sup_{x_1,x_2\in\Delta}\vert\big(f_1-f_2\big)\circ L^{-1}_{\bf j_1}\circ L_{\bf j_1}\circ L_{\bf j_2}(x_1)-\big(f_1-f_2\big)\circ L^{-1}_{\bf j_1}\circ  L_{\bf j_1}\circ L_{\bf j_2}(x_2)\vert\\
	&\leq\alpha^k_{\infty} \sum_{{\bf j_2}\in\{1,2,\ldots,N\}^{m-k}}\sum_{{\bf j_1}\in\{1,2,\ldots,N\}^{k}}\sup_{x_1,x_2\in\Delta}\vert\big(f_1-f_2\big)\circ L_{\bf j_2}(x_1)-\big(f_1-f_2\big)\circ L_{\bf j_2}(x_2)\vert\\
		&=N^k\alpha^k_{\infty} \sum_{{\bf j_2}\in\{1,2,\ldots,N\}^{m-k}}\sup_{x_1,x_2\in\Delta}\vert\big(f_1-f_2\big)\circ L_{\bf j_2}(x_1)-\big(f_1-f_2\big)\circ L_{\bf j_2}(x_2)\vert\\
		&=(N\alpha_{\infty})^k\sum_{{\bf j_2}\in\{1,2,\ldots,N\}^{m-k}}\sup_{x_1,x_2\in L_{{\bf j_2}}(\Delta)=\Delta_{{\bf j_2}}}\vert\big(f_1-f_2\big)(x_1)-\big(f_1-f_2\big)(x_2)\vert\\
	&=(N\alpha_{\infty})^k\mathcal{R}(m-k,f_1-f_2).
 \end{align*} 
Hence for the partition $\Delta=\bigcup_{{\bf j_1}\in\{1,2,\ldots,N\}^{k}}\Delta_{{\bf j_1}}$, using the above expression and (\ref{rbcntv}) in (\ref{tgsue}), we get
\begin{align*}
	 \lVert \mathcal{T}f_1-\mathcal{T}f_2\rVert_{\mathscr{R}^{\beta}(\Delta)}&\leq \alpha^k_{\infty}\lVert f_1-f_2\rVert_{\infty,\Delta}+(N\alpha_{\infty})^k\sup_{m\in \mathbb{N}}\frac{\mathcal{R}(m-k,f_1-f_2)}{N^{m(2-\beta)}}\\
	 &=\alpha^k_{\infty}\lVert f_1-f_2\rVert_{\infty,\Delta}+\frac{(N\alpha_{\infty})^k}{N^{k(2-\beta)}}\sup_{m\in \mathbb{N}}\frac{\mathcal{R}(m-k,f_1-f_2)}{N^{(m-k)(2-\beta)}}\\
	 &\leq\max\left\{\alpha^k_{\infty},\frac{\alpha^k_{\infty}}{N^{k(1-\beta)}}\right\}\left(\lVert f_1-f_2\rVert_{\infty,\Delta}+\sup_{m\in \mathbb{N},m>k}\frac{\mathcal{R}(m,f_1-f_2)}{N^{m(2-\beta)}}\right)\\
	 &=c~\lVert f_1-f_2\rVert_{\mathscr{R}^{\beta}(\Delta)},
\end{align*}
where $c=\max\left\{\alpha^k_{\infty},\frac{\alpha^k_{\infty}}{N^{k(1-\beta)}}\right\} <1$. This shows that $\mathcal{T}$ is a contraction map on $\big(\mathscr{R}^{\beta}_{f^\alpha}(\Delta),\lVert \cdot\rVert_{\mathscr{R}^{\beta}(\Delta)}\big)$. Hence, by the Banach fixed point theorem, $\mathcal{T}$ has a fixed point $f^*\in\mathscr{R}^{\beta}_{f^\alpha}(\Delta)$. Also $\mathcal{T}(f^*)\rvert_{Z_k}=f^\alpha\rvert_{Z_k}$, and it satisfies the functional equation (\ref{srebfh}). Hence by uniqueness we conclude that $ f^\alpha=f^*\in\mathscr{R}^{\beta}(\Delta)$.\\
 Now, using Proposition \ref{doaft}, we get	
	\begin{align*}
		2\leq \dim_HG(f^\alpha)\leq \dim_BG(f^\alpha)\leq 3-\beta, ~\mbox{if}~ 0\leq\beta\leq 1
	\end{align*} 
	and
	\begin{align*}
	\dim_HG(f^\alpha)=\dim_BG(f^\alpha)=2,~\mbox{if}~1<\beta\leq 2.
	\end{align*}
 Also, since $\mu_{\alpha}$ is the probability measure with support $G(f^\alpha)$ generated by the IFS \\
 $\left\{\big(\Delta\times \mathbb{R}; W_{\bf i}\big):~{\bf i}\in\{1,2,\ldots,N\}^k\right\}$, it follows that $\mu_{\alpha}\big(G(f^\alpha)\big)>0$. Hence by (\ref{mmdfg}), we get
 \begin{align*}
 	\dim_H\mu_{\alpha}\leq \dim_HG(f^\alpha).
 \end{align*} 
Therefore,
\begin{enumerate}
	\item if $0\leq\beta\leq 1$, then $\dim_H\mu_{\alpha} \leq 3-\beta$, and 
	\item  if $1<\beta\leq 2$, then $\dim_H\mu_{\alpha} \leq 2$.
\end{enumerate}
This completes the proof.
 \end{proof}
\section*{Conclusion}
In this article, we presented the construction of a non-affine hypersurface on an $n$-simplex in $\mathbb{R}^n$. We also estimated the fractal dimension of the graph of this non-affine multivariate real-valued fractal function under certain conditions. Moreover, we estimated the upper bound of the Hausdorff dimension of the invariant probability measure supported on the graph of this fractal function. This investigation was conducted under the assumption of constant scale factors, within the context of Euclidean space. Future research directions may include considering variable scale factors and generalizing the results to $L^{p}$ spaces, thereby broadening the scope of applicability.\\

\textbf{Acknowledgments.} The first author would like to thank Dr. Md. Nasim Akhtar for insightful discussions and valuable suggestions. Also, extend sincere gratitude to Presidency University's Department of Mathematics for their support during this research. The second author acknowledges partial support from National Funding from FCT - Fundação para a Ciência e a Tecnologia, under the project: UIDB/04561/2020. 
\bibliographystyle{abbrv} 
\bibliography{Bibliography}	
	
\end{document}